\theoremstyle{plain}
\newtheorem{Theorem}{Theorem}[section]
\newtheorem{Corollary}[Theorem]{Corollary}
\newtheorem{Lemma}[Theorem]{Lemma}
\newenvironment{Proof}
{\begin{trivlist}\item[]{{\sc Proof.}}}{\hfill{$\square$}\noindent\end{trivlist}}
\theoremstyle{definition}
\newtheorem{Definition}[Theorem]{Definition}
\theoremstyle{remark}
\begin{document}

%%%%%%%%%%%%%%%%%%%%%%%%%%%%%%%%%%%%%%%%%%%%%%%%%%%%%%%%%%%%%%%%%%%%%%%%%%%%%%
%% Title:
%%%%%%%%%%%%%%%%%%%%%%%%%%%%%%%%%%%%%%%%%%%%%%%%%%%%%%%%%%%%%%%%%%%%%%%%%%%%%%

%%\title[Binary projective $2^3$-divisible linear codes with length $n=59$ do not exist]{Binary projective $\mathbf{2^3}$-divisible linear codes with length $\mathbf{n=59}$ do not exist}
\title[The lengths of projective triply-even binary codes]{The lengths of projective triply-even binary codes}

%%\date{}

 \author{Thomas Honold, Michael Kiermaier, Sascha Kurz, and Alfred Wassermann}
 \address{Thomas Honold, ZJU-UIUC Institute, Zhejiang University, 314400 Haining, China.}
 \email{honold@zju.edu.cn} 
 \address{Michael Kiermaier, University of Bayreuth, 95440 Bayreuth, Germany}
 \email{michael.kiermaier@uni-bayreuth.de}
 \address{Sascha Kurz, University of Bayreuth, 95440 Bayreuth, Germany}
 \email{sascha.kurz@uni-bayreuth.de}
 \address{Alfred Wassermann, University of Bayreuth, 95440 Bayreuth, Germany}
 \email{alfred.wassermann@uni-bayreuth.de}

\abstract{%%The non-existence of projective linear codes over $\mathbb{F}_q$ with given effective length $n$ and weights divisible by $q^{k-1}$ has 
%%e.g.\ applications for the determination of the maximum size of a partial $k$-spread in $\mathbb{F}_q^k$. Since only a finite set $\mathcal{F}_q(k-1)$ of integers 
%%cannot occur as the effective length of a projective $q^{k-1}$-divisible linear code, it makes sense to try to exactly determine $\mathcal{F}_q(k-1)$ for 
%%small parameters. Here, we do so for $\mathcal{F}_2(3)$ and resolve the last open case.\\ %% and obtain partial results for $\mathcal{F}_2(4)$.\\[2mm]
It is shown that there does not exist a binary projective triply-even code of length $59$. This settles the last open length for projective triply-even binary codes. 
Therefore, projective triply-even binary codes exist precisely for lengths $15$, $16$, $30$, $31$, $32$, $45$--$51$, and $\ge 60$.\\[2mm]
\textbf{Keywords:} divisible codes, projective codes, partial spreads\\
\textbf{MSC:} Primary 94B05;  Secondary  51E23.
}}

\maketitle

\section{Introduction}
Doubly-even codes were subject to extensive research in the last years. For applications and enumeration results 
we refer e.g.\ to \cite{doran2011codes}. More recently, triply-even codes where studied, see e.g.\ \cite{betsumiya2012triply}.  
These two classes of binary linear codes are special cases of so-called $\Delta$-divisible codes, where all weights of a $q$-ary linear code 
$\mathcal{C}$ are divisible by $\Delta$, see e.g.\ \cite{ward1999introduction}. 
%% A linear code $\mathcal{C}$ over $\mathbb{F}_q$ is called $\Delta$-divisible if the Hamming weight of each codeword is divisible by 
%% $\Delta$, see e.g.\ \cite{ward1999introduction}. 
%%If $\mathcal{C}$ is projective, then 
The columns of a $k\times n$ generator matrix of $\mathcal{C}$ generate $n$ one-dimensional subspaces of $\mathbb{F}_q^k$ that are also called \emph{points} in Projective 
Geometry, see e.g.~\cite{dodunekov1998codes} or \cite[Chapter 17]{bierbrauer2004introduction}. The $\Delta$-divisibility of the linear code $\mathcal{C}$ translates as follows 
to the multiset $\mathcal{P}$ of points in 
$\mathbb{F}_q^k$. For each hyperplane $H$ of $\mathbb{F}_q^k$ we have $\# (\mathcal{P}\cap H)\equiv \#\mathcal{P} \pmod \Delta$. The set 
$\mathcal{P}$ is also called $\Delta$-divisible then. We remark that the hyperplanes correspond to the codewords of $\mathcal{C}$. A recent application of $\Delta$-divisible 
codes and sets is the maximum possible cardinality of partial $k$-spreads in $\mathbb{F}_q^v$, i.e., sets of $k$-dimensional subspaces in 
$\mathbb{F}_q^v$ with pairwise trivial intersection, see e.g.\ \cite{honold2018partial,kurz2017packing}. Due to the intersection property, 
every point of $\mathbb{F}_q^v$ is covered by at most one element of a given partial $k$-spread. Calling every non-covered point a hole, we 
can state that the set of holes of a partial $k$-spread is $q^{k-1}$-divisible, see e.g.~\cite[Theorem 8]{honold2018partial} containing also a generalization to 
so-called vector space partitions.\footnote{In a special case, the divisibility of the set of holes was already used in \cite{beutelspacher1975partial} to determine 
an upper bound for the maximum cardinality of a partial $k$-spread.} So, from the non-existence of $q^{k-1}$-divisible sets (or projective $q^{k-1}$ divisible linear codes, 
since we have a set of holes in this application) of a suitable effective length $n$ one can conclude the non-existence of partial $k$-spreads in $\mathbb{F}_q^v$ of a certain cardinality. 
Indeed, all currently known upper bounds for partial $k$-spreads can be obtained from such non-existence results for divisible codes, see 
e.g.~\cite{honold2018partial,kurz2017packing}.

From an application point of view, $q^r$-divisible linear codes, where $r$ is some positive rational number, are of special interest. If 
$G_1$ is a generator matrix of a $\Delta$-divisible $[n_1,k_1]_q$ code and $G_2$ is the generator matrix of another $\Delta$-divisible 
$[n_2,k_2]_q$ code, then $\begin{pmatrix}G_1&0\\0&G_2\end{pmatrix}$ is the generator matrix of a $\Delta$-divisible $[n_1+n_2,k_1+k_2]_q$ 
code. Since the set of all points of a $k$-dimensional subspace of $\mathbb{F}_q^v$ is $q^{k-1}$-divisible, for each prime power $q$ and each 
$r\in\mathbb{Q}_{>0}$, such that $q^r\in\mathbb{N}$, there exists a finite set $\mathcal{F}_q(r)$ of integers that cannot be the cardinality of a $q^r$-divisible (multi)-set 
or effective length of a (projective) $q^r$-divisible linear code. For multisets of points, i.e., linear codes, the question is completely 
resolved in \cite[Theorem 4]{kiermaier2017improvement} for all integers $r$ and all prime powers $q$. For sets of points or projective 
$q^r$-divisible linear codes the question is more complicated. A partial answer has been given in \cite[Theorem 13]{honold2018partial}:
\begin{Theorem}$\,$\\[-4mm]
\label{thm_characterization}
\begin{enumerate}[(i)]
  \item\label{picture_q_2_r_1} $2^1$-divisible sets over $\mathbb{F}_2$ of
    cardinality $n$ exist for all $n\geq 3$ and do not exist for
    $n\in\{1,2\}$.
  \item\label{picture_q_2_r_2} $2^2$-divisible sets over $\mathbb{F}_2$ of
    cardinality $n$ exist for $n\in\{7,8\}$ and all $n\geq 14$, and do
    not exist in all other cases.
  \item\label{picture_q_2_r_3} $2^3$-divisible sets over $\mathbb{F}_2$ of
    cardinality $n$ exist for
    $n\in\{15,16,30,31,32,45,46,$ $47,48,49,50,51\}$, for all $n\geq 60$,
    and possibly for $n=59$; in all other cases they do not exist.
  \end{enumerate}
\end{Theorem}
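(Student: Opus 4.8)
\emph{Proof plan.} The existence assertions in Theorem~\ref{thm_characterization} — in particular existence for every $n\ge 60$ — together with all of the non-existence assertions except the one for $n=59$ are already established (see \cite[Theorem~13]{honold2018partial}). So the whole statement follows once we prove that \emph{there is no binary projective $2^3$-divisible code of length $59$}; equivalently, that there is no spanning set $\mathcal{P}$ of $59$ points in some $\mathbb{F}_2^k$ with $\#(\mathcal{P}\cap H)\equiv 59\equiv 3\pmod 8$ for every hyperplane $H$.

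First I would fix the coarse structure of such a $\mathcal{P}$. For any codimension-$2$ subspace $U\subseteq\mathbb{F}_2^k$ the three hyperplanes $H_1,H_2,H_3$ through $U$ satisfy $\sum_{i=1}^3\#(\mathcal{P}\cap H_i)=\#\mathcal{P}+2\,\#(\mathcal{P}\cap U)$, hence $9\equiv 59+2\,\#(\mathcal{P}\cap U)\pmod 8$ and $\#(\mathcal{P}\cap U)\equiv 3\pmod 4$. Consequently every hyperplane section $\mathcal{P}\cap H$ is itself a binary projective $2^2$-divisible set, of cardinality $m:=\#(\mathcal{P}\cap H)$ with $m\equiv 3\pmod 8$ and $m\le 58$; by Theorem~\ref{thm_characterization}(ii) this forces $m\in\{19,27,35,43,51\}$. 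Equivalently, the non-zero weights of the associated code $\mathcal{C}$ all lie in $\{8,16,24,32,40\}$. Moreover, since $2^{k-1}\equiv 0\pmod 8$ for $k\ge 4$, the complement $\overline{\mathcal{P}}=\mathrm{PG}(k-1,2)\setminus\mathcal{P}$ is again a projective $2^3$-divisible set, of cardinality $2^k-60$ and with $\#(\overline{\mathcal{P}}\cap H)=(2^{k-1}-1)-m$. For $k=6$ this is already impossible: $m\le 31$ then forces $m\in\{19,27\}$, the value $m=19$ is excluded since it would give $\#(\overline{\mathcal{P}}\cap H)=12>4=\#\overline{\mathcal{P}}$, so $m=27$ for every hyperplane $H$; but then $\overline{\mathcal{P}}\subseteq\bigcap_H H=\{\mathbf 0\}$, contradicting $\#\overline{\mathcal{P}}=4$. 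Hence $k\ge 7$.

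Next I would reduce the admissible dimensions to a short explicit list (a priori $7\le k\le 59$, since $59$ points span $\mathbb{F}_2^k$). The first three standard equations read $\sum_i A_i=2^k$, $\sum_i iA_i=59\cdot 2^{k-1}$ and $\sum_i i^2A_i=885\cdot 2^k$, where $A_i$ is the number of codewords of $\mathcal{C}$ of weight $i$, with $A_0=1$ and $A_i$ supported on $\{8,16,24,32,40\}$. Combined with $A_i\ge 0$, with non-negativity of the dual weight distribution (the linear-programming bound), with the parallel constraints for $\overline{\mathcal{P}}$ — whose cardinality $2^k-60$ and whose hyperplane-section sizes $2^{k-1}-1-m$ must again be admissible sizes for a $2^3$-divisible, resp.\ a $2^2$-divisible, set (Theorem~\ref{thm_characterization}(iii) and (ii)) — and with the known lower bounds for the length of a $2^3$-divisible code of given dimension, I expect only a small handful of dimensions to survive.

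Finally, for each surviving dimension $k$, I would carry out an exhaustive computer-assisted search: build up the point set of $\mathcal{C}$ incrementally (or, dually, assemble the hyperplane spectrum), pruning with all the structure obtained above — each hyperplane section has to be one of the finitely many, and classifiable, binary projective $2^2$-divisible sets of length $19$, $27$, $35$, $43$ or $51$, and distinct sections must agree on their common codimension-$2$ flats (which are of size $\equiv 3\pmod 4$). Showing that no globally consistent configuration exists completes the proof. The hardest part will be this search — keeping the backtracking small enough to terminate — and it is precisely the reductions of the previous two paragraphs (the five admissible section sizes, the complement trick, and the classification of shorter divisible codes) that should make it feasible. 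Adding the resulting non-existence for $n=59$ to the already known positive and negative results then yields Theorem~\ref{thm_characterization} in its sharp form, i.e.\ with the word ``possibly'' removed.
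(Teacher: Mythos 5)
Your reduction of the whole theorem to the single non-existence claim for $n=59$ is exactly right, and your first structural step matches the paper's: the codimension-$2$ counting shows every hyperplane section is a projective $2^2$-divisible set of size $\equiv 3\pmod 8$, whence by part~(ii) the non-zero weights of $\mathcal{C}$ lie in $\{8,16,24,32,40\}$. Your complement observation is also correct. But beyond that point the proposal has a genuine gap: the decisive step is deferred to an exhaustive backtracking search whose feasibility you only conjecture, and the reductions you offer do not make it clearly feasible. In particular, the linear-programming/MacWilliams constraints do give the lower bound $k\ge 10$ (the paper derives $A_{16}+A_{40}=-6-3A_8+2^k/128\ge 0$, hence $2^k\ge 768$), but they do \emph{not} yield a small upper bound on $k$: codes of length $59$ with weights in $\{8,16,24,32,40\}$ exist up to dimension $18$, and even with weight $40$ forbidden up to dimension $15$, so "only a small handful of dimensions survive" is not something the LP alone delivers. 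The missing idea is the paper's Lemma~\ref{lemma_secants}: a code of minimal dimension $k$ must have every point of $\mathbb{F}_2^k\setminus\mathcal{P}$ on a secant (otherwise project through a secant-free point to drop the dimension while preserving projectivity, $2^3$-divisibility and length), and since $\binom{59}{2}=1711<2^k-60$ for $k\ge 11$, this forces $k=10$. Without this (or the completed computer search of the appendix) your argument does not close.

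For comparison, once $k=10$ is fixed the paper finishes entirely by hand: Lemmas~\ref{lemma_we_2_2_n_19}--\ref{lemma_exclude_19_k_10} classify the three possible weight enumerators of a projective $4$-divisible residual code of length $19$ and show each is incompatible with the MacWilliams constraints at $k=10$, so $A_{40}=0$; the identities then force the unique distribution $A_8=0$, $A_{16}=2$, $A_{24}=312$, $A_{32}=709$; and restricting $\mathcal{C}$ to the support of a weight-$16$ codeword yields a $2^2$-divisible (hence self-orthogonal) code of length $16$ and dimension $\ge 9>8$, a contradiction. Your planned search is essentially the paper's appendix (which enumerates all $[n,k]_2$ codes with the admissible weights up to $n=59$ and checks projectivity), so the route is viable in principle; but as written, neither the dimension reduction nor the search is actually established, so the proof is incomplete.
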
      

In part~(iii) the existence question for a binary projective $2^3$-divisible linear code remains undecided. The aim of this paper is 
to complete this characterization result. Indeed, we will show that $n=59$ is impossible. We remark that the distinction between the existence 
of projective and possibly non-projective $q^r$-divisible linear codes of a certain length plays indeed a role for e.g.\ upper bounds on the 
maximum possible cardinality of partial $k$-spreads. As an example, in \cite[Theorem 13]{honold2018partial}, see also \cite{kurz2017packing}, 
it is shown that no projective $2^3$-divisible linear code of length $52$ exists, while there are non-projective examples with these parameters. 
From this non-existence result for projective $q^r$-divisible codes we can conclude that there can be at most $132$ solids in $\mathbb{F}_2^{11}$ 
with pairwise trivial intersection, which is the tightest currently known upper bound. With a lower bound of $129$, this is the smallest open 
case for the maximum cardinality of partial $k$-spreads over $\mathbb{F}_2$.  

The remaining part of the paper is structured as follows. In Section~\ref{sec_preliminaries} we state the necessary preliminaries from 
coding theory before we prove the non-existence of a binary projective $2^3$-divisible linear code with effective length $n=59$ in 
Section~\ref{sec_no_59}.  
%% In Section~\ref{sec_16_divisible} we present partial results on the possible lengths of binary projective 
%% $2^4$-divisible linear codes. 
We close with a brief conclusion and some open problems in Section~\ref{sec_conclusion}.  

\section{Preliminaries}
\label{sec_preliminaries}

A projective linear code $\mathcal{C}$ over $\mathbb{F}_q$ is called $q^r$-divisible for some $r\in\mathbb{Q}_{>0}$, such that 
$q^r\in\mathbb{N}$\footnote{More precisely, this conditions says that $q^r$ should be a power of the field characteristic $p$. In \cite[Theorem 1]{ward1981divisible} 
it has been shown that $\Delta$-divisible codes where $\Delta$ is relatively prime to $q$ correspond to repetitions of smaller codes. Thus, it 
suffices to consider $\Delta=p^l$ for integers $l$.}, if the 
weight of each codeword is divisible by $q^r$. By $A_i[\mathcal{C}]$ we denote the number of codewords of weight exactly $i$. 
Note that $A_0[\mathcal{C}]=1$. Whenever the code is clear from the context, we write $A_i$ instead of $A_i[\mathcal{C}]$. By 
$n=n(\mathcal{C})$ we denote the length and by $k=k(\mathcal{C})$ the dimension of the linear code. Given our assumption that 
$\mathcal{C}$ is projective the length equals the effective length\footnote{Whenever we speak of the length of a code in this paper, 
we mean its effective length. So, $[n,k]_q$ codes are $k$-dimensional linear codes in $\mathbb{F}_q^n$ with effective length $n$.}, i.e., 
there are no zero-columns in the generator matrix of 
$\mathcal{C}$ and we have a one-to-one correspondence to a set of $n$ spanning points in $\mathbb{F}_q^k$. Without creating confusion 
we use the same symbol $\mathcal{C}$ also for sets of points. By $a_i$ we denote the number of hyperplanes of $\mathbb{F}_q^k$ containing exactly 
$i$ points. Here, we have the relation $a_i=A_{n-i}$ for $0\le i<n$. Denoting the dual of a code by $\mathcal{C}^\perp$ we write 
$B_i=B_i(\mathcal{C})$ for the number of codewords of weight $i$ of $\mathcal{C}^\perp$. Due to our assumption that $\mathcal{C}$ is a 
projective code we have $B_0=1$ and $B_1=B_2=0$. The well-known MacWilliams identities, see e.g.~\cite{macwilliams1977theory}, relate the 
$A_i$ with the $B_i$ as follows:
$$
  \sum_{j=0}^n K_i(j)A_j(\mathcal{C})=2^kB_i(\mathcal{C})\quad\text{for } 0\le i\le n,
$$   
where 
$$
  K_i(j)=\sum_{s=0}^n (-1)^s {{n-j}\choose{i-s}}{{j}\choose{s}} \quad\text{for } 0\le i\le n.
$$
Obviously, we have $\sum_{i=0}^n A_i=2^k$, which is indeed equivalent to the first ($i=0$) MacWilliams equation. 
The polynomial $w(\mathcal{C})=\sum\limits_{i=0}^n A_i(\mathcal{C}) x^i$ is called the weight enumerator of $\mathcal{C}$.

For a given $[n,k]_q$ code $\mathcal{C}$ and a codeword $c\in\mathcal{C}$ of weight $w$ we can consider the so-called residual code 
$\mathcal{C}_w$, which arises from $\mathcal{C}$ by restricting all codewords to those coordinates where $c$ has a zero entry. Thus, 
$\mathcal{C}_w$ is an $[n-w,\le k-1]_q$ code. If $\mathcal{C}$ is projective, then obviously also $\mathcal{C}_w$ is projective. Moreover, 
if $\mathcal{C}$ is $q^r$-divisible, then $\mathcal{C}_w$ is $q^{r-1}$-divisible, see e.g.\ \cite[Lemma 7]{honold2018partial}. Since 
the possible lengths of binary projective $2^2$-divisible linear codes are characterized in Theorem~\ref{thm_characterization}.(ii), we can state 
that the weights of a binary projective $2^3$-divisible linear code $\mathcal{C}_{59}$ of length $n=59$ have to be contained in $\{8,16,24,32,40\}$.  
For codewords of weight $40$ it is possible to characterize the possible corresponding residual codes: 
\begin{Lemma}
  \label{lemma_we_2_2_n_19}
  Let $\mathcal{C}$ be a projective $2^2$-divisible code of length $n=19$, then its weight enumerator is one of the following:
  %% three cases:
  \begin{itemize}
    \item $w(\mathcal{C})=1+1\cdot x^4+75\cdot x^8+51\cdot x^{12}$, i.e., $k=7$ and $B_3=5$;
    \item $w(\mathcal{C})=1+78\cdot x^8+48\cdot x^{12}+1\cdot x^{16}$, i.e., $k=7$ and $B_3=1$;
    \item $w(\mathcal{C})=1+4\cdot x^4+150\cdot x^8+100\cdot x^{12}+1\cdot x^{16}$, i.e., $k=8$ and $B_3=1$.
  \end{itemize}
  %%Moreover, in each of the three cases there is a unique isomorphism type of a corresponding linear code.
\end{Lemma}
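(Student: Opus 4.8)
The plan is to analyze a projective $2^2$-divisible code $\mathcal{C}$ of length $n=19$ via the MacWilliams identities together with the divisibility constraints. First I would record that the nonzero weights of $\mathcal{C}$ lie in $\{4,8,12,16\}$, because weights are positive multiples of $4$ and at most $n=19$; write $A_4,A_8,A_{12},A_{16}$ for the corresponding counts, with $A_0=1$. The first three MacWilliams equations (for $i=0,1,2$), using $B_0=1$, $B_1=B_2=0$ (projectivity), give three linear relations among $A_4,A_8,A_{12},A_{16},2^k$: concretely $\sum A_i=2^k$, a first-moment identity $\sum i A_i = n\cdot 2^{k-1}$ (equivalently $\sum_i A_i(n-i)=n2^{k-1}$, coming from $B_1=0$), and a second-moment identity coming from $B_2=0$. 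Solving this linear system parametrically, say in terms of $k$ (and treating $B_3$ as the free parameter appearing in the $i=3$ equation), should express each $A_i$ as an affine function of $2^k$ and $B_3$.

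Next I would impose integrality and nonnegativity: all $A_i\ge 0$ and $B_3\ge 0$, together with the fact that a projective code of length $19$ has bounded dimension. A quick dimension bound: since $\mathcal{C}$ is $2^2$-divisible and projective of length $19$, its dual has minimum distance $\ge 3$, and the Griesmer-type or simple counting bound forces $k$ to be small — in fact one expects only $k\in\{7,8\}$ to survive, with $k\le 6$ ruled out because $2^k$ would be too small to accommodate enough hyperplanes (equivalently $A_0$ plus the forced positive $A_i$'s overshoot, or the residual/averaging argument fails), and $k\ge 9$ ruled out because then $B_3$ or some $A_i$ is forced negative. Running through the finitely many $(k,B_3)$ pairs left by these inequalities should pin down exactly the three listed weight enumerators; in each case one reads off $k$ and $B_3$ directly from the solved system.

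The main obstacle I anticipate is not the MacWilliams bookkeeping but sharply bounding the dimension $k$ and excluding the borderline residual cases — i.e., showing no solution with $k\le 6$ or $k\ge 9$, and that for $k=7$ only $B_3\in\{1,5\}$ works while $k=8$ forces $B_3=1$. This is where one must combine nonnegativity of all $A_i$ with at least one extra structural fact, such as: a projective $2^2$-divisible code of length $19$ cannot have two codewords of weight $4$ whose supports intersect in a way violating projectivity, or a counting bound on $A_4$ (points lying on a common small flat), or an appeal to the classification of short $2^2$-divisible codes implicit in Theorem~\ref{thm_characterization}.(ii). Once the admissible $(k,B_3)$ are isolated, substituting back into the parametric solution yields the three weight enumerators, completing the proof. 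I would also double-check each candidate weight enumerator against the remaining MacWilliams equations ($i\ge 4$) to confirm all $B_i\ge 0$, though the $i\le 3$ equations plus nonnegativity of the $A_i$ typically already suffice.
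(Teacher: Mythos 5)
Your overall strategy (MacWilliams identities for $i\le 3$ plus nonnegativity and integrality of the $A_i$ and $B_3$) is the same as the paper's starting point, and it does correctly isolate the dimension: the inequalities automatically force $k\in\{7,8\}$, so the dimension bound you worry about is not the real obstacle. The genuine gap is elsewhere: this system does \emph{not} leave only the three listed weight enumerators. It admits a fourth solution, $w(\mathcal{C})=1+5x^4+147x^8+103x^{12}$ with $k=8$ and $B_3=3$, which has all $A_i\ge 0$, $B_3\ge 0$, and in fact all $B_i\ge 0$ (the MacWilliams transform gives $B_4=2$). So your proposed fallback of checking nonnegativity of the higher $B_i$ would not eliminate it either, and your remark that the $i\le 3$ equations plus nonnegativity ``typically already suffice'' fails precisely here.

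The missing idea is self-orthogonality: a binary code all of whose weights are divisible by $4$ satisfies $\mathcal{C}\subseteq\mathcal{C}^\perp$ (since $2\,|\mathrm{supp}(x)\cap\mathrm{supp}(y)|=w(x)+w(y)-w(x+y)\equiv 0\pmod 4$), hence $A_i\le B_i$ for every $i$. Applied to the fourth candidate this gives $A_4=5>2=B_4$, a contradiction, which is exactly how the paper disposes of it. You gesture at ``a counting bound on $A_4$'' or an intersection argument for weight-$4$ codewords as a possible extra structural fact, which is the right instinct, but without identifying the inequality $A_4\le B_4$ (or an equivalent concrete mechanism) the proof is incomplete: the fourth weight enumerator survives everything you actually propose to check.
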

\begin{Proof}
  The first four MacWilliams identities directly yield the three mentioned cases and additionally the 
  weight enumerator $w(\mathcal{C})=1+5\cdot x^4+147\cdot x^8+103\cdot x^{12}$, i.e., $k=8$ (and $B_3=3$). Using the 
  MacWilliams transform we compute $B_4=2$. This contradicts $A_4\le B_4$, which is implied by self-orthogonality of $2^2$-divisible codes. 
  %% sage: mtrafo(1+5*Z^4+147*Z^8+103*Z^12)
  %% Z^19 + 3*Z^16 + 2*Z^15 + 72*Z^14 + 168*Z^13 + 94*Z^12 + 156*Z^11 +
  %% 528*Z^10 + 528*Z^9 + 156*Z^8 + 94*Z^7 + 168*Z^6 + 72*Z^5 + 2*Z^4 +
  %% 3*Z^3 + 1
\end{Proof}

Moreover, each of the three mentioned weight enumerators is attained by a projective $2^2$-divisible code of length $n=19$ having 
a nice geometric description:

Assume that $\mathcal{P}$ is a $4$-divisible set of points of size $n$ in some
ambient $\mathbb{F}_2$-vector space $V$.
For each plane $E$ intersecting $\mathcal{P}$ in a line $L$, $\mathcal{P}' =
(\mathcal{P} \setminus L) \cup (E\setminus L)$ is a $4$-divisible set of points of
size $n+1$.
In other words, the line $L$ is \emph{switched} to the affine plane $E\setminus L$.
The $4$-divisibility is seen by considering the characteristic functions of the
involved point sets, see \cite[Lemma~14]{honold2018partial}: We have
$\chi_{\mathcal{P}'} = \chi_{\mathcal{P}} + \chi_E - 2\chi_L$, where $\mathcal{P}$
and $E$ are $4$-divisible and $L$ is $2$-divisible.

In fact, all three types of projective $4$-divisible codes of length $19$ can be
constructed by switching four pairwise skew lines $L_1,\ldots,L_4$ in a solid $S$.
The choice of $L_1,\ldots,L_4$ is unique up to isomorphism and arises as the unique
line spread of $S$ without one of its lines.
In particular, each code is the union of the line $S\setminus\{L_1,L_2,L_3,L_4\}$
and $4$ affine planes.
For the explicit constructions below, we pick $L_1, L_2, L_3$ and $L_4$ as the row
spaces of the following matrices:
\begin{align*}
        L_1 & : \begin{pmatrix}
                1 & 0 & 0 & 0 \\
                0 & 1 & 0 & 0 
        \end{pmatrix} &
        L_2 & : \begin{pmatrix}
                1 & 0 & 1 & 0 \\
                0 & 1 & 0 & 1
        \end{pmatrix} &
        L_3 & : \begin{pmatrix}
                1 & 0 & 1 & 1 \\
                0 & 1 & 1 & 0
        \end{pmatrix} &
        L_4 & : \begin{pmatrix}
                1 & 0 & 0 & 1 \\
                0 & 1 & 1 & 1
        \end{pmatrix}
\end{align*}
The remaining three points of $S$ are the points on the line
\[
        \begin{pmatrix}
                0 & 0 & 1 & 0 \\
                0 & 0 & 0 & 1
        \end{pmatrix}\text{.}
\]
The lines will be embedded in the ambient space ($\mathbb{F}_2^7$ or $\mathbb{F}_2^8$) by adding all-zero coordinates.
Furthermore, the $i$th unit vector will be denoted by $e_i$ and for a vector $v$, the point generated by $v$ will be denoted by $\langle v\rangle$.
        
By the uniqueness of the choice of the $4$ lines, the result $\mathcal{P}$ of the
switching only depends on its image in $V/S$ of size $4$.
There are $3$ possible constellations of $4$ points:

\begin{itemize}
        \item Four points in general position, spanning a solid.
        The resulting code $C_1$ is of dimension $8$ and has weight distribution $(0^1 4^4
8^{150} 12^{100} 16^1)$ and $18432$ automorphisms.

        For an explicit construction, we switch in the planes $L_1 + \langle e_5\rangle$, $L_2 + \langle e_6\rangle$,
$L_3 + \langle e_7\rangle$ and $L_4 + \langle e_8\rangle$.
	This leads to the generator matrix
        \[
                \left(
                \begin{array}{ccccccccccccccccccc}
                0 & 0 & 0 & 0 & 1 & 0 & 1 & 0 & 1 & 0 & 1 & 0 & 1 & 0 & 1 & 0 & 1 & 0 & 1 \\
                0 & 0 & 0 & 0 & 0 & 1 & 1 & 0 & 0 & 1 & 1 & 0 & 0 & 1 & 1 & 0 & 0 & 1 & 1 \\
                1 & 0 & 1 & 0 & 0 & 0 & 0 & 0 & 1 & 0 & 1 & 0 & 1 & 1 & 0 & 0 & 0 & 1 & 1 \\
                0 & 1 & 1 & 0 & 0 & 0 & 0 & 0 & 0 & 1 & 1 & 0 & 1 & 0 & 1 & 0 & 1 & 1 & 0 \\
                0 & 0 & 0 & 1 & 1 & 1 & 1 & 0 & 0 & 0 & 0 & 0 & 0 & 0 & 0 & 0 & 0 & 0 & 0 \\
                0 & 0 & 0 & 0 & 0 & 0 & 0 & 1 & 1 & 1 & 1 & 0 & 0 & 0 & 0 & 0 & 0 & 0 & 0 \\
                0 & 0 & 0 & 0 & 0 & 0 & 0 & 0 & 0 & 0 & 0 & 1 & 1 & 1 & 1 & 0 & 0 & 0 & 0 \\
                0 & 0 & 0 & 0 & 0 & 0 & 0 & 0 & 0 & 0 & 0 & 0 & 0 & 0 & 0 & 1 & 1 & 1 & 1    
                \end{array}
                \right)\text{.}
        \]
        \item The three points on a line together with an additional point (i.e., the
complement of a triangle in a plane).
        The resulting code $C_2$ is of dimension $7$ and has weight distribution $(0^1 4^1
8^{75} 12^{51})$ and $1440$ automorphisms.

        Switching in the planes $L_1 + \langle e_5\rangle$, $L_2 + \langle e_6\rangle$, $L_3 + \langle e_7\rangle$, $L_4 + \langle e_6 + e_7\rangle$,
we get the explicit generator matrix
        \[
                \left(
                \begin{array}{ccccccccccccccccccc}
                0 & 0 & 0 & 0 & 1 & 0 & 1 & 0 & 1 & 0 & 1 & 0 & 1 & 0 & 1 & 0 & 1 & 0 & 1 \\
                0 & 0 & 0 & 0 & 0 & 1 & 1 & 0 & 0 & 1 & 1 & 0 & 0 & 1 & 1 & 0 & 0 & 1 & 1 \\
                1 & 0 & 1 & 0 & 0 & 0 & 0 & 0 & 1 & 0 & 1 & 0 & 1 & 1 & 0 & 0 & 0 & 1 & 1 \\
                0 & 1 & 1 & 0 & 0 & 0 & 0 & 0 & 0 & 1 & 1 & 0 & 1 & 0 & 1 & 0 & 1 & 1 & 0 \\
                0 & 0 & 0 & 1 & 1 & 1 & 1 & 0 & 0 & 0 & 0 & 0 & 0 & 0 & 0 & 0 & 0 & 0 & 0 \\
                0 & 0 & 0 & 0 & 0 & 0 & 0 & 1 & 1 & 1 & 1 & 0 & 0 & 0 & 0 & 1 & 1 & 1 & 1 \\
                0 & 0 & 0 & 0 & 0 & 0 & 0 & 0 & 0 & 0 & 0 & 1 & 1 & 1 & 1 & 1 & 1 & 1 & 1
                \end{array}
                \right)\text{.}
        \]

        The following alternative construction is worth noting:
        Switching three pairwise disjoint lines in $S$ such that the image modulo $S$ is a
line results in a binary $4$-divisible $[18,6]$-code.
        This code is the same as the concatenation of the $\mathbb{F}_4$-linear hexacode
(arising from a hyperoval over $\mathbb{F}_4$) with the binary $[3,2]$ simplex
code.
        Geometrically, it is the disjoint union of $6$ lines in an ambient space $H$ of
algebraic dimension $6$.
        Switching any of its lines always increases the ambient space dimension by one and
leads to the code $C_2$.
        The hyperplane $H$ corresponds $18 - 3 = 15$ points and belongs to the single
codeword of weight $4$.

        \item An affine plane (i.e., the complement of a line in a plane).
        The resulting code $C_3$ is of dimension $7$ and has weight distribution $(0^1
8^{78} 12^{48} 16^1)$ and $5760$ automorphisms.

        Switching in the planes $L_1 + \langle e_5\rangle$, $L_2 + \langle e_6\rangle$, $L_3 + \langle e_7\rangle$, $L_4 + \langle e_5 + e_6 + e_7\rangle$, we get the explicit generator matrix
        \[
                \left(
                \begin{array}{ccccccccccccccccccc}
                0 & 0 & 0 & 0 & 1 & 0 & 1 & 0 & 1 & 0 & 1 & 0 & 1 & 0 & 1 & 0 & 1 & 0 & 1 \\
                0 & 0 & 0 & 0 & 0 & 1 & 1 & 0 & 0 & 1 & 1 & 0 & 0 & 1 & 1 & 0 & 0 & 1 & 1 \\
                1 & 0 & 1 & 0 & 0 & 0 & 0 & 0 & 1 & 0 & 1 & 0 & 1 & 1 & 0 & 0 & 0 & 1 & 1 \\
                0 & 1 & 1 & 0 & 0 & 0 & 0 & 0 & 0 & 1 & 1 & 0 & 1 & 0 & 1 & 0 & 1 & 1 & 0 \\
                0 & 0 & 0 & 1 & 1 & 1 & 1 & 0 & 0 & 0 & 0 & 0 & 0 & 0 & 0 & 1 & 1 & 1 & 1 \\
                0 & 0 & 0 & 0 & 0 & 0 & 0 & 1 & 1 & 1 & 1 & 0 & 0 & 0 & 0 & 1 & 1 & 1 & 1 \\
                0 & 0 & 0 & 0 & 0 & 0 & 0 & 0 & 0 & 0 & 0 & 1 & 1 & 1 & 1 & 1 & 1 & 1 & 1
                \end{array}
                \right)\text{.}
        \]

        The code $C_3$ also arises from shortening the $[24,12]$ extended binary Golay code
$5$ times.
        Note that the result does not depend on the choice of the $5$ positions, as the
automorphism group of the Golay code (the Mathieu group $M_{24}$) acts $5$-fold
transitive on set of the coordinates.
\end{itemize}

\section{The non-existence of binary projective $2^3$-divisible linear codes of length $n=59$}
\label{sec_no_59}

Setting $y=2^k$, the first four MacWilliams identities yield
\begin{eqnarray*}
A_{16}[\mathcal{C}] &=& -10-4A_8-\frac{45}{4096}y+\frac{1}{4096}yB_3\\
A_{24}[\mathcal{C}] &=& 20+6A_8+\frac{1447}{4096}y-\frac{3}{4096}yB_3\\
A_{32}[\mathcal{C}] &=& -15-4A_8+\frac{2617}{4096}y+\frac{3}{4096}yB_3\\
A_{40}[\mathcal{C}] &=&  4+A_8+\frac{77}{4096}y-\frac{1}{4096}yB_3.
\end{eqnarray*}
Thus, we have $A_{16}+A_{40}=-6-3A_8+\frac{y}{128}\ge 0$, so that $y\ge 768$. (Commonly, we will write just $A_i$ instead of $A_i[\mathcal{C}]$.) 
Since $y=2^k$, we conclude $k\ge 10$ and $y\ge 1024$.
Choosing a fixed dimension $k$, the non-negativity constraints for the $A_i$ and $B_i$ form a polyhedron of dimension at most $2$, i.e., 
a polygon. As parameterizing variables, we use $A_8$ and $B_3$. Since $A_{16}\ge 0$, we have
$$ 
  B_3\ge 45 +\frac{10\cdot 4096}{y}.
$$
Since $A_{16}+4A_{40}\ge 0$, we have
$$
  B_3\le 87+\frac{2}{3} +\frac{2^{13}}{y}, 
$$
so that
$$
  A_8+A_{16}\le \frac{42+\frac{2}{3}}{4096}y-8.
$$
Since $A_{16}+A_{40}\ge 0$, we have
$$
  A_8\le \frac{y}{384}-2.
$$ 

\begin{Definition}
  Let $\mathcal{C}$ be a projective linear code of effective length $n$ in $\mathbb{F}_q^n$ with dimension $k=\dim(\mathcal{C})>2$. 
  For a pair of different codewords (of $\mathcal{C}$) let $\mathcal{T}:=\langle c_1,c_2\rangle=\{c_1,c_2,c_3\}$ denote the two-dimensional 
  subcode generated by $c_1$ and $c_2$. By $\left(\{w(c_1),w(c_2),w(c_3)\},\frac{w(c_1)+w(c_2)+w(c_3)}{2}\right)$ we denote the 
  \textit{subcode-type} of $\mathcal{T}$ Here, the first three numbers, read as a multiset, are the weights of the codewords of 
  $\mathcal{T}$ and the last number is the corresponding effective length.  
\end{Definition}

As an example, we consider a binary projective $2^3$-divisible code $\mathcal{C}$ of length $n=59$ and list the possible 
subcode-types containing a fixed codeword of length $40$: 
\begin{itemize}
  \item $e_1$: $(\{40,24,16\},40)$;
  \item $e_2$: $(\{40,32,8\},40)$;
  \item $e_3$: $(\{40,24,24\},44)$;
  \item $e_4$: $(\{40,32,16\},44)$;
  \item $e_5$: $(\{40,40,8\},44)$; 
  \item $e_6$: $(\{40,32,24\},48)$;
  \item $e_7$: $(\{40,40,16\},48)$; 
  \item $e_8$: $(\{40,40,24\},52)$; 
  \item $e_9$: $(\{40,32,32\},52)$; 
  \item $e_{10}$: $(\{40,40,32\},56)$.
\end{itemize}
Here $e_i$ denotes the number of times this subcode-type occurs in $\mathcal{C}$, where a codeword of weight $40$ is fixed. 
Counting the codewords of $\mathcal{C}$ by weight gives
\begin{eqnarray*}
  A_{40}[\mathcal{C}] &=& 1+e_5+e_7+e_8+e_{10}\\
  A_{32}[\mathcal{C}] &=& e_2+e_4+e_6+2e_9+e_{10}\\
  A_{24}[\mathcal{C}] &=& e_1+2e_3+e_6+e_8\\
  A_{16}[\mathcal{C}] &=&e_1+e_4+e_7\\
  A_{8}[\mathcal{C}]  &=& e_2+e_5,
\end{eqnarray*}
so that obviously $\sum_{i=1}^{10} e_i=2^{\dim(\mathcal{C})-1}-1=2^{k-1}-1$. In the remaining part of this section $\mathcal{C}$ always denotes 
a binary projective $2^3$-divisible code of length $n=59$.  

\begin{Lemma}
  \label{eq_e_19}
  Let $\mathcal{D}$ be the residual code of a codeword $c$ of weight $40$ of $\mathcal{C}$. Setting $z=2^{\dim(\mathcal{C})-\dim(\mathcal{D})-1}$, 
  we have
  \begin{eqnarray*}
    z-1 &=& e_1+e_2\\
    z\cdot A_4[\mathcal{D}] &=& e_3+e_4+e_5\\
    z\cdot A_8[\mathcal{D}] &=& e_6+e_7\\
    z\cdot A_{12}[\mathcal{D}] &=& e_8+e_9\\
    z\cdot A_{16}[\mathcal{D}] &=& e_{10}.
  \end{eqnarray*}
\end{Lemma}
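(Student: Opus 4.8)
The plan is to analyse the linear projection $\pi\colon\mathcal{C}\to\mathbb{F}_2^{19}$ that restricts each codeword of $\mathcal{C}$ to the $19$ coordinates on which $c$ vanishes. By the definition of the residual code we have $\pi(\mathcal{C})=\mathcal{D}$, and since $\pi$ is $\mathbb{F}_2$-linear its kernel $K:=\ker\pi$ consists precisely of the codewords of $\mathcal{C}$ whose support is contained in $\operatorname{supp}(c)$. As $w(c)=40$, the codeword $c$ itself lies in $K$, so $K\neq\{0\}$, $\lvert K\rvert=2^{\dim(\mathcal{C})-\dim(\mathcal{D})}=2z$, and every fibre $\pi^{-1}(d)$ with $d\in\mathcal{D}$ is a coset of $K$, hence of size $2z$.

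The key remark concerns effective lengths. Let $\mathcal{T}=\langle c,c'\rangle$ be a two-dimensional subcode of $\mathcal{C}$ containing $c$, with nonzero codewords $c$, $c'$ and $c+c'$. On the $40$ coordinates of $\operatorname{supp}(c)$ the effective length of $\mathcal{T}$ is $40$, while on the remaining $19$ coordinates $c$ is zero, so there $c'$ and $c+c'$ coincide and both restrict to $\pi(c')$. Hence the effective length of $\mathcal{T}$ equals $40+w\bigl(\pi(c')\bigr)$, and since $\mathcal{D}$ is a projective $2^2$-divisible code of length $19$ (see Lemma~\ref{lemma_we_2_2_n_19}) we have $w(\pi(c'))\in\{0,4,8,12,16\}$.

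Now I would split the count of the $2^{k-1}-1$ two-dimensional subcodes through $c$ according to whether $c'\in K$. The subcodes with $c'\in K$ are exactly the two-dimensional subcodes contained in $K$ that pass through $c$; their number is $\bigl(\lvert K\rvert-2\bigr)/2=z-1$, and by the remark they are precisely the subcodes of effective length $40$, i.e.\ those counted by $e_1$ and $e_2$; this gives $z-1=e_1+e_2$. For $c'\notin K$ write $d:=\pi(c')\neq 0$; then $\mathcal{T}$ has effective length $40+w(d)$. Fixing a nonzero $d\in\mathcal{D}$ of weight $w'$, the subcodes $\mathcal{T}\ni c$ with $\pi(c')=d$ correspond two-to-one to the elements of $\pi^{-1}(d)$ (via $c'$ and $c+c'$, and using $c\notin\pi^{-1}(d)$ because $d\neq 0$), so there are exactly $\lvert\pi^{-1}(d)\rvert/2=z$ of them; summing over the $A_{w'}[\mathcal{D}]$ codewords of weight $w'$ shows that $z\cdot A_{w'}[\mathcal{D}]$ subcodes have effective length $40+w'$. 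Reading off from the list of subcode-types that effective length $44$, $48$, $52$, $56$ corresponds respectively to $\{e_3,e_4,e_5\}$, $\{e_6,e_7\}$, $\{e_8,e_9\}$, $\{e_{10}\}$ then yields the four remaining identities.

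I do not expect a serious obstacle: the argument is a double count organised by the fibres of $\pi$. The points requiring care are (a) the effective-length identity $\ell(\mathcal{T})=40+w(\pi(c'))$, which genuinely uses $w(c)=40$ so that all $40$ coordinates of $\operatorname{supp}(c)$ are active and the complement consists of exactly $19$ coordinates, together with the observation from the list of subcode-types that each prescribed effective length $40,44,48,52,56$ is realised by exactly the advertised collection of $e_i$; and (b) the two bookkeeping factors of $2$ — one in $\bigl(\lvert K\rvert-2\bigr)/2$ and one in $\lvert\pi^{-1}(d)\rvert/2$ — arising from identifying a subcode through $c$ with the unordered pair of its two non-$c$ nonzero codewords.
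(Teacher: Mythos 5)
Your proof is correct and follows essentially the same route as the paper, whose entire proof is the instruction to count the two-dimensional subcodes of $\mathcal{C}$ containing $c$ by their effective length, using that the nonzero weights of the $2^2$-divisible residual code $\mathcal{D}$ lie in $\{4,8,12,16\}$. You have merely made explicit the bookkeeping via the fibres of the restriction map $\pi$ (kernel of size $2z$, each fibre contributing $z$ subcodes, effective length $40+w(\pi(c'))$), which is exactly the intended argument.
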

\begin{Proof}
  Observe that the non-zero weights of $\mathcal{D}$ are contained in $\{4,8,12,16\}$, since $\mathcal{D}$ is $2^2$-divisible, and 
  count the $2$-dimensional subcodes of $\mathcal{C}$ containing codeword $c$ by their effective length.
\end{Proof}

\begin{Lemma}
  \label{lemma_exclude_19_conf_2}
  Let $\mathcal{C}$ be a projective $2^3$-divisible code of length $n=59$, $k=\dim(\mathcal{C})$ and $d$ be a codeword of weight $40$. 
  For the corresponding residual code $\mathcal{D}$ we have $w(\mathcal{D})\neq 1+78\cdot x^8+48\cdot x^{12}+1\cdot x^{16}$. 
\end{Lemma}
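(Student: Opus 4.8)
The plan is to argue by contradiction: assume a weight-$40$ codeword $d$ of $\mathcal{C}$ has residual code $\mathcal{D}$ with $w(\mathcal{D}) = 1 + 78\cdot x^8 + 48\cdot x^{12} + 1\cdot x^{16}$, and then combine the MacWilliams relations for $\mathcal{C}$ recorded at the start of Section~\ref{sec_no_59} with the subcode-counting identities of Lemma~\ref{eq_e_19} to force a non-negative integer to equal $-6$.

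First I would read off the consequences of the assumed weight enumerator. By Lemma~\ref{lemma_we_2_2_n_19} it has $\dim(\mathcal{D}) = 7$, so the parameter of Lemma~\ref{eq_e_19} is $z = 2^{k-8}$ with $k = \dim(\mathcal{C})$; in particular $y := 2^k = 256z$, hence $\frac{y}{128} = 2z$. Since $A_4[\mathcal{D}] = 0$, $A_8[\mathcal{D}] = 78$, $A_{12}[\mathcal{D}] = 48$ and $A_{16}[\mathcal{D}] = 1$, Lemma~\ref{eq_e_19} gives $e_3 = e_4 = e_5 = 0$, $e_{10} = z$, and $e_1 + e_2 = z - 1$.

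Next I would substitute $e_3 = e_4 = e_5 = 0$ and $e_{10} = z$ into the codeword-count equations displayed just before Lemma~\ref{eq_e_19}, which collapse to $A_8[\mathcal{C}] = e_2$, $A_{16}[\mathcal{C}] = e_1 + e_7$ and $A_{40}[\mathcal{C}] = 1 + e_7 + e_8 + z$. Adding the latter two and eliminating $e_1$ via $e_1 = z - 1 - e_2$ yields
\[
  A_{16}[\mathcal{C}] + A_{40}[\mathcal{C}] = 2z - e_2 + 2e_7 + e_8 = 2z - A_8[\mathcal{C}] + 2e_7 + e_8 .
\]
On the other hand, the MacWilliams computation at the start of Section~\ref{sec_no_59} gives $A_{16}[\mathcal{C}] + A_{40}[\mathcal{C}] = -6 - 3A_8[\mathcal{C}] + \frac{y}{128} = 2z - 6 - 3A_8[\mathcal{C}]$. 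Comparing the two expressions and cancelling $2z$ leaves $2A_8[\mathcal{C}] + 2e_7 + e_8 = -6$, contradicting $A_8[\mathcal{C}], e_7, e_8 \ge 0$.

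I expect no genuine obstacle: once the right combination is spotted, this is a two-line linear computation. The only point requiring care is bookkeeping — noting that fixing $w(\mathcal{D})$ pins down $\dim(\mathcal{D})$, hence the precise relation $\frac{y}{128} = 2z$ between the ``global'' scaling $y = 2^k$ in the MacWilliams relations and the ``local'' scaling $z$ in the subcode counts, so that the two formulas for $A_{16}[\mathcal{C}] + A_{40}[\mathcal{C}]$ are actually being equated in the same variables.
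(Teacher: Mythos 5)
Your proof is correct and takes essentially the same approach as the paper: both use Lemma~\ref{eq_e_19} to translate the assumed residual weight enumerator into constraints on the $e_i$, and then play these off against the MacWilliams relations from the start of Section~\ref{sec_no_59} to force a non-negative quantity to be negative. The only difference is cosmetic — the paper bounds $A_8+A_{16}+A_{24}+A_{32}+A_{40}$ from below and compares with $2^k-1$, while you equate two exact expressions for $A_{16}+A_{40}$; both yield the same contradiction.
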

\begin{Proof}
  Assume the contrary. Since $A_{16}[\mathcal{D}]=1$ and $\dim(\mathcal{D})=7$, we have $e_{10}=2^{k-8}$, so that $A_{40}\ge 2^{k-8}+1$.
  Since $e_1+e_2=2^{k-8}-1$, we have $A_{8}+A_{16}\ge 2^{k-8}-1$. Thus, $A_{8}+A_{16}+A_{40}\ge 2^{k-7}$. From the above we compute 
  $A_{24}+A_{32}=5+2A_8+\frac{127}{128}y=5+2A_8+\frac{127}{128}\cdot 2^k$, so that 
  $A_{8}+A_{16}+A_{24}+A_{32}+A_{40}\ge 5+2A_8 +2^k> 2^k-1$, which is a contradiction.  
  %% 
  %% Since $\mathcal{D}$ contains a codeword of weight $40$, we consider the two-dimensional subcodes of $\mathcal{C}$ 
  %% containing codeword $d$ and having effective length $56$. Here, the two other codewords must have weights 
  %% $40$ and $32$. (Egg-type $(\{19,19,27\},3)$.) Setting $k=\dim(\mathcal{C})$, we observe that there are exactly $2^{k-8}$ such subcodes, so 
  %% that $A_{40}\ge 2^{k-8}+1$. Next, we consider the two-dimensional subcodes of $\mathcal{C}$ 
  %% containing codeword $d$ and having effective length $40$. Here, the two other codewords must have weights 
  %% $24$ and $16$ or $32$ and $8$ (Egg-types $(\{19,35,43\},19)$ or $(\{19,27,51\},19)$.) Since there are $2^{k-8}-1$ such subcodes, 
  %% we have $A_{8}+A_{16}\ge 2^{k-8}-1$.
  %% 
  %% Thus $A_{8}+A_{16}+A_{40}\ge 2^{k-7}$. From the above we compute $A_{24}+A_{32}=5+2A_8+\frac{127}{128}y=5+2A_8+\frac{127}{128}\cdot 2^k$, so that 
  %% $A_{8}+A_{16}+A_{24}+A_{32}+A_{40}\ge 5+2A_8 +2^k> 2^k-1$, which is a contradiction.
\end{Proof}

\begin{Lemma}
  \label{lemma_exclude_19_conf_3}
  Let $\mathcal{C}$ be a projective $2^3$-divisible code of length $n=59$, $k=\dim(\mathcal{C})$ and $d$ be a codeword of weight $40$. 
  For the corresponding residual code $\mathcal{D}_1$ we have $w(\mathcal{D}_1)\neq 1+4\cdot x^4+150\cdot x^8+100\cdot x^{12}+1\cdot x^{16}$. 
\end{Lemma}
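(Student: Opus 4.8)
The proposed proof mirrors Lemma~\ref{lemma_exclude_19_conf_2}, now exploiting that a residual code with the stated weight enumerator has dimension~$8$. Suppose, for a contradiction, that $\mathcal{C}$ has a weight-$40$ codeword $d$ whose residual $\mathcal{D}_1$ has weight enumerator $1+4x^4+150x^8+100x^{12}+x^{16}$, so that $\dim(\mathcal{D}_1)=8$ and $z=2^{k-9}$ in Lemma~\ref{eq_e_19}; hence
\[
  e_1+e_2=2^{k-9}-1,\qquad e_3+e_4+e_5=4\cdot 2^{k-9},\qquad e_6+e_7=150\cdot 2^{k-9},\qquad e_8+e_9=100\cdot 2^{k-9},\qquad e_{10}=2^{k-9}.
\]
From $A_{40}\ge e_{10}+1=2^{k-9}+1$ and $A_8+A_{16}\ge e_1+e_2=2^{k-9}-1$, combined with $A_{24}+A_{32}=5+2A_8+\frac{127}{128}\cdot 2^k$, $A_0=1$ and $\sum_i A_i=2^k$, one obtains $3A_8+A_{16}+A_{40}=2^{k-7}-6$ and $A_8+A_{16}+A_{40}\ge 2^{k-8}$, hence $A_8\le 2^{k-9}-3$. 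For $k=10$ this forces $A_8<0$, a contradiction, so henceforth $k\ge 11$.

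For $k\ge 11$ I would next record a closure property of the set $X$ of weight-$40$ codewords of $\mathcal{C}$ with residual (isomorphic to) the dimension-$8$ code $C_1$, i.e.\ with the weight enumerator in question. By Lemma~\ref{lemma_exclude_19_conf_2} the residual of any weight-$40$ codeword is either this $C_1$ or the dimension-$7$ code with weight enumerator $1+x^4+75x^8+51x^{12}$; and a weight-$40$ codeword of the latter type has $e_{10}=0$, since that residual has $A_{16}=0$. As every type-$e_{10}$ subcode $\langle d,d'\rangle$ contains a second weight-$40$ codeword $d'$ and is again a type-$e_{10}$ subcode through $d'$, we get $d'\in X$ whenever $d\in X$; thus $X$ is nonempty, each of its members lies in exactly $2^{k-9}$ type-$e_{10}$ subcodes (with the other weight-$40$ codeword back in $X$), and $|X|\ge 2^{k-9}+1$. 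If, in addition, some weight-$40$ codeword has the dimension-$7$ residual, then the corresponding instance of Lemma~\ref{eq_e_19} yields $A_8+A_{16}\ge 2^{k-8}-1$; feeding this, $A_8\le 2^{k-9}-3$ and $A_{40}\ge 2^{k-9}+1$ back into $3A_8+A_{16}+A_{40}=2^{k-7}-6$ forces $k\ge 12$ and $A_8\le 2^{k-10}-3$, and combining this tightened estimate with the feasibility polygon for $(A_8,B_3)$ (the inequalities $A_8\le\frac{2^k}{384}-2$, $45+\frac{10\cdot 4096}{2^k}\le B_3\le 87+\frac{2}{3}+\frac{2^{13}}{2^k}$, and the ones derived from $A_{24},A_{32}\ge 0$) closes that subcase.

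What remains is the case where \emph{every} weight-$40$ codeword has residual $C_1$. Here I would bring in the explicit structure of $C_1$: it is the union of its unique line $\ell$ (the three collinear points, which accounts for $B_3(C_1)=1$) and the four affine planes $E_i\setminus L_i$ partitioning the remaining $16$ points, with $A_{16}(C_1)=1$ realized by the codeword supported off $\ell$. Lifting $\ell$ to $\mathcal{C}$ shows that its three points lie in $H_d$ and in $H_{d'}$ for every type-$e_{10}$ partner $d'$ of $d$, which forces $\operatorname{supp}(d')\setminus\operatorname{supp}(d)$ to be the fixed $16$-set $C_1\setminus\ell$; consequently the kernel $K=\{c\in\mathcal{C}:\operatorname{supp}(c)\subseteq\operatorname{supp}(d)\}$ of the restriction map $\mathcal{C}\to\mathcal{D}_1$ is a $2^3$-divisible code of effective length $40$ and dimension $k-8\ge 3$ with the palindromic weight enumerator $1+e_2x^8+e_1x^{16}+e_1x^{24}+e_2x^{32}+x^{40}$, where $e_1+e_2=2^{k-9}-1$. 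A contradiction is then to be extracted by analysing how a projective $2^3$-divisible code of length $59$ with the tightly constrained global weight distribution can contain such a $K$ on a $40$-coordinate set — combining the MacWilliams identities for $K$ and for $\mathcal{C}$ with the observation that the point sets of the residuals of $d$, of a partner $d'$ and of $d+d'$ partition $\mathcal{C}$ into blocks of sizes $3$, $16$, $16$, $24$. I expect this last step, i.e.\ controlling the admissible extensions of $K$ to $\mathcal{C}$ uniformly in $k$, to be the main obstacle.
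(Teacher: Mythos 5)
Your opening computation is sound and, for $k=10$, already yields a contradiction: from $e_{10}=2^{k-9}$, $e_1+e_2=2^{k-9}-1$ and the MacWilliams identity $3A_8+A_{16}+A_{40}=2^{k-7}-6$ you correctly get $A_8\le 2^{k-9}-3<0$. Since the paper ultimately only invokes this lemma with $k=10$ (the authors say as much in the conclusion), that part is genuinely useful. But the lemma is stated for arbitrary $k$, and your treatment of $k\ge 11$ has two real gaps. First, in the subcase where some weight-$40$ codeword has the dimension-$7$ residual, you derive $A_8\le 2^{k-10}-3$ and then assert that the feasibility polygon ``closes that subcase''; it does not obviously do so --- for $k=12$ your bound gives only $A_8\le 1$, and the listed inequalities such as $A_8\le 2^k/384-2$ are far weaker. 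Second, and more seriously, the main remaining case (every weight-$40$ codeword has residual $C_1$) is explicitly left open: you construct the kernel code $K$ and its palindromic weight enumerator correctly, but you yourself flag that extracting a contradiction from it is ``the main obstacle.'' As written, the proposal is an incomplete proof of the stated lemma.

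The idea you are missing is the one the paper uses to make the argument uniform in $k$: exploit the type-$e_{10}$ partner $d'$ as a \emph{second} hyperplane $H_2$ meeting the point set in $19$ points, with $\#(\overline{\mathcal{D}}_1\cap\overline{\mathcal{D}}_2)=3$. Counting codimension-$2$ subspaces meeting $\overline{\mathcal{C}}$ in $3$ points inside $H_1$ and inside $H_2$ separately (they share only $H_1\cap H_2$) doubles the lower bound to $A_{40}\ge 2^{k-8}$. Likewise, the $2^{k-9}-1$ codimension-$2$ subspaces in each of $H_1$, $H_2$ meeting $\overline{\mathcal{C}}$ in $19$ points give $2^{k-8}-2$ subspaces in total, and the crucial step is showing that two of them (one from each hyperplane) can share a $43$-point hyperplane at most $A_8$ times: otherwise the complement of the symmetric difference $\overline{\mathcal{D}}_1\triangle\overline{\mathcal{D}}_2$ (a $2^2$-divisible set of size $32$) inside that hyperplane would be a $2^2$-divisible set of cardinality $43-32=11$, which does not exist by Theorem~\ref{thm_characterization}(ii). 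This yields $2A_8+A_{16}\ge 2^{k-8}-2$, and together with $A_{40}\ge 2^{k-8}$ and $2A_8+A_{16}+A_{40}=-6-A_8+2^{k-7}$ one gets $A_8\le -4$ for every $k$, with no case split and no need for the structure of $C_1$ or the kernel code $K$.
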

\begin{Proof}
  Assume the contrary. Since $A_{16}[\mathcal{D}_1]=1$ and $\dim(\mathcal{D}_1)=8$, we have $e_{10}=2^{k-9}$, so that $A_{40}\ge 2^{k-9}+1$.
  Since $e_1+e_2=2^{k-9}-1$, we have $A_{8}+A_{16}\ge 2^{k-9}-1$. Now we switch to the geometric language. By $\overline{\mathcal{C}}$ we 
  denote the set of $59$ points in $\mathbb{F}_2^k$ corresponding to $\mathcal{C}$ and by $\overline{\mathcal{D}}_1$ we 
  denote the set of $19$ points corresponding to $\mathcal{D}$, i.e., there exists a hyperplane $H_1$ of $\mathbb{F}_2^k$ with 
  $\overline{\mathcal{D}}_1=H_1\cap \overline{\mathcal{C}}$. Since $e_{10}\ge 1$, there exists another hyperplane $H_2$ such that 
  $\#\overline{\mathcal{D}}_2=19$ and $\#\overline{\mathcal{D}}_1\cap\overline{\mathcal{D}}_2=3$, where 
  $\overline{\mathcal{D}}_2=H_2\cap\overline{\mathcal{C}}$. $e_{10}=2^{k-9}$ counts the number of subspaces $S$ of codimension $2$ of 
  $\mathbb{F}_2^k$ with $S\le H_1$ and $\#S\cap \overline{\mathcal{C}}=3$. One of these, call it $S'$, is also contained in $H_2$, i.e., 
  $S'=H_1\cap H_2$. Let $\mathcal{D}_2$ denote the residual code corresponding to $\overline{\mathcal{D}}_2$. Since $S'\cap\overline{\mathcal{C}}$ 
  corresponds to a codeword of weight $16$ in $\mathcal{D}_2$. Thus, $H_2$ contains $2^{k-1-\dim(\mathcal{D}_2)}\ge 2^{k-9}$ subspaces $S$ of 
  codimension $2$ of $\mathbb{F}_2^k$ with $S\le H_2$ and $\#S\cap \overline{\mathcal{C}}=3$. 
  %% (Due to Lemma~\ref{lemma_exclude_19_conf_2} and Lemma~\ref{lemma_we_2_2_n_19} we have $\mathcal{D}_2=8$, which however yields no benefit in our subsequent consideration.) 
  In total we have at least $2^{k-8}-1$ different subspaces $S_i$ of codimension $2$ of $\mathbb{F}_2^k$ with $\#S\cap \overline{\mathcal{C}}=3$ and either 
  $S\le H_1$ or $S\le H_2$. ($H_1\cap H_2$ determines a unique such subspace.) Let us number these subspaces in such a way that $S_1=H_1\cap H_2$, 
  $S_i\le H_1$, $S_i\not\le H_2$ for $2\le i\le 2^{k-9}$ and $S_i\le H_2$, $S_i\not\le H_1$ for $2^{k-9}+1 \le i \le 2^{k-8}-1$. Each of these 
  subspaces $S_i$ determines two unique hyperplanes $H_{i,1}$ and $H_{i,2}$ of $\mathbb{F}_2^k$ with $S_i\le H_{i,j}$ and 
  $\#H_{i,j}\cap\overline{\mathcal{C}}=19$. W.l.o.g.\  we assume $H_{1,1}=H_1$, $H_{1,2}=H_2$, $H_{1,1}=H_1$ for $2\le i\le 2^{k-9}$ and 
  $H_{i,1}=H_2$ for $2^{k-9}+1\le i \le 2^{k-8}-1$. With this, the hyperplanes $H_1$, $H_2$, and $H_{i,2}$ for $2\le i\le 2^{k-8}-1$ 
  are pairwise disjoint by construction and contain exactly $19$ points of $\overline{\mathcal{C}}$ each. Thus, we have 
  $A_{40}[\mathcal{C}]\ge 2^{k-8}$. %%Thus, we have 
  %% $$
  %%   -6-2A_8+2^{k-7}=A_8+A_{16}+A_{40}\ge 3\cdot 2^{k-9}-1,
  %% $$    
  %% so that $A_8\le \left\lfloor 2^{k-10}-\frac{5}{2} \right\rfloor=2^{k-10}-3$. (This intermediate result is not necessary for the following 
  %% argumentation.)
  
  Let $H_1$ and $H_2$ be the same two hyperplanes as above. Since $e_1+e_2=2^{k-9}-1$, there are $2^{k-9}-1$ subspaces $T$ of codimension 
  $2$ of $\mathbb{F}_2^k$ with $T\le H_1$ and $\# T\cap\overline{\mathcal{C}}=19$. Since $\dim(\mathcal{D}_2)\le 8$ there are 
  (at least) $2^{k-9}-1$ subspaces $T$ of codimension $2$ of $\mathbb{F}_2^k$ with $T\le H_2$ and $\# T\cap\overline{\mathcal{C}}=19$. 
  Since $\#\overline{\mathcal{D}}_1\cap\overline{\mathcal{D}}_2=3$, we have at least $2^{k-8}-2$ such subspaces $T_1,\dots, T_{2^{k-8}-2}$. 
  Each of these subspaces is either contained in a hyperplane containing $43$ or $51$ points of $\overline{\mathcal{C}}$. For 
  $1\le i,j\le 2^{k-8}-2$ and $i\neq j$ assume that there is a hyperplane $H$ of $\mathbb{F}_2^k$ with $T_i\le H$, $T_j\le H$, and 
  $\#H\cap\overline{\mathcal{C}}\ge 43$. W.l.o.g.\ we assume $T_i\le H_1$ and $T_j\le H_2$. (Observe that $\dim(T_i\cup T_j)=k-1$, so that 
  e.g.\ $S_i\le H_1$ and $S_j\le H_1$ implies $\langle S_i,S_j\rangle=H_1=H$.)   
  Now, we argue that $\#H\cap\overline{\mathcal{C}}=51$. Assume, to the contrary, that $\#H\cap\overline{\mathcal{C}}=43$. 
  Since $\#\overline{\mathcal{D}}_1=19$, $\#\overline{\mathcal{D}}_2=19$, and $\#\overline{\mathcal{D}}_1\cap \overline{\mathcal{D}}_2=3$, 
  we have $\#\overline{\mathcal{C}}'=32$, where $\overline{\mathcal{C}}'= \overline{\mathcal{D}}_1\cup \overline{\mathcal{D}}_2 - 
  \overline{\mathcal{D}}_1\cap \overline{\mathcal{D}}_2$. Note that $\overline{\mathcal{C}}'$ is a $2^2$-divisible set, so that 
  $H\cap\overline{\mathcal{C}}-\overline{\mathcal{C}}'$ is also a $2^2$-divisible set. However, there is no $2^2$-divisible set of 
  cardinality $43-32=11$, which is a contradiction. Thus, $A_{16}\ge 2\cdot\left(2^{k-9}-1\right)-2A_8$, which can be rewritten to 
  $2A_8+A_{16}\ge 2^{k-8}-2$. Thus, we have 
  $$
    -6-A_8+2^{k-7}=2A_8+A_{16}+A_{40}\ge 2^{k-8}-2+2^{k-8}=2^{k-7}-2,
  $$    
  so that $A_8\le -4$, which is a contradiction.   
\end{Proof}

%% \begin{Lemma}
%%   \label{lemma_additional_mw_constraints_19_3}
%%   Let $\mathcal{C}$ be a projective $2^3$-divisible code of length $n=59$ and $d$ be a codeword of weight $40$. 
%%   If the corresponding residual code $\mathcal{D}$ satisfies  $w(\mathcal{D})=1+4\cdot x^4+150\cdot x^8+100\cdot x^{12}+1\cdot x^{16}$, 
%%   the we have $A_{40}\ge 2^{k-9}+1$ and $A_{8}+A_{16}\ge 2^{k-9}-1$.  
%% \end{Lemma}
%% \begin{Proof}
%%   Consider two-dimensional subcodes containing codeword $d$ and having effective length $56$ or $40$, respectively, see the previous Lemma.
%% \end{Proof}

\begin{Lemma}
  \label{lemma_exclude_19_k_10}
  There is no projective $2^3$-divisible code of length $n=59$, dimension $k\le 10$ that contains a codeword of weight $40$.
\end{Lemma}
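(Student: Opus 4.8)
The plan is to start by noting that the hypothesis $k\le 10$ is vacuous unless $k=10$: the estimate $A_{16}+A_{40}=-6-3A_8+\tfrac{y}{128}\ge 0$ obtained at the beginning of Section~\ref{sec_no_59} gives $y\ge 768$, hence $k\ge 10$ for \emph{every} binary projective $2^3$-divisible code of length $59$. So I may assume $k=10$ and $y=2^{10}=1024$. Plugging this into the bound $A_8\le\tfrac{y}{384}-2$ derived there yields $A_8\le\tfrac{8}{3}-2<1$, so $A_8=0$; in the notation of the subcode-type count preceding Lemma~\ref{eq_e_19} this reads $e_2=e_5=0$, since $A_8[\mathcal{C}]=e_2+e_5$.

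Next I would identify the residual code. Fix a codeword $d$ of weight $40$, which exists by hypothesis, and let $\mathcal{D}$ be its residual code. As recalled in Section~\ref{sec_preliminaries}, $\mathcal{D}$ is a projective $2^2$-divisible code of length $59-40=19$, so by Lemma~\ref{lemma_we_2_2_n_19} its weight enumerator is one of the three listed possibilities. The second and third of these are excluded by Lemmas~\ref{lemma_exclude_19_conf_2} and \ref{lemma_exclude_19_conf_3} respectively, so necessarily
\[
  w(\mathcal{D})=1+x^4+75x^8+51x^{12},
\]
whence $\dim(\mathcal{D})=7$ and $A_4[\mathcal{D}]=1$.

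Now I combine this with Lemma~\ref{eq_e_19} applied to $d$. Here $z=2^{k-\dim(\mathcal{D})-1}=2^{10-7-1}=4$, so the first equation of Lemma~\ref{eq_e_19} gives $e_1+e_2=z-1=3$; since $e_2=0$ from the first step, $e_1=3$. Therefore $A_{16}[\mathcal{C}]=e_1+e_4+e_7\ge 3$. On the other hand, substituting $A_8=0$ and $y=1024$ into the MacWilliams expression $A_{16}=-10-4A_8-\tfrac{45}{4096}y+\tfrac{1}{4096}yB_3$ from the start of Section~\ref{sec_no_59} gives $A_{16}=\tfrac14(B_3-85)$, and the bound $B_3\le 87+\tfrac{2}{3}+\tfrac{2^{13}}{y}=95+\tfrac{2}{3}$ established there forces $B_3\le 95$, hence $A_{16}\le 2$. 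This contradicts $A_{16}\ge 3$, and the proof is complete.

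With the earlier lemmas in place there is no serious obstacle here; the argument is a short numerical deduction, and the only thing requiring care is the bookkeeping — confirming that the residual code of the weight-$40$ word is indeed the one left over after Lemma~\ref{lemma_we_2_2_n_19} and the two exclusion lemmas, and that $A_8=0$ forces $e_2=0$ and hence $e_1=3$ via Lemma~\ref{eq_e_19}. A more pedestrian alternative would be to enumerate the admissible values $B_3\in\{85,89,93\}$ (those with $B_3\equiv 1\pmod 4$ in the allowed range, $B_3=93$ being discarded since it forces $A_{40}=0$) and derive the contradiction $A_{16}<3$ in each case; the uniform estimate above avoids this case split.
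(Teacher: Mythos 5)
Your proof is correct and follows essentially the same route as the paper's: reduce to $k=10$, use Lemmas~\ref{lemma_we_2_2_n_19} and \ref{lemma_exclude_19_conf_3} to force $\dim(\mathcal{D})=7$, deduce $e_1+e_2=3$ from Lemma~\ref{eq_e_19}, and contradict the MacWilliams bounds from the start of Section~\ref{sec_no_59}. The only cosmetic difference is that you split the contradiction into $A_8=0$ and $A_{16}\le 2$ separately, whereas the paper uses the single combined bound $A_8+A_{16}\le 8/3<3$.
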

\begin{Proof}
  As shown at the beginning of this section, we have $k\ge 10$, so that we can assume $k=10$. Now, assume the contrary and let $\mathcal{D}$ be the 
  residual code of a codeword of weight $40$. Due to Lemma~\ref{lemma_we_2_2_n_19} 
  and Lemma~\ref{lemma_exclude_19_conf_3} we have $\dim(\mathcal{D})=7$. Since $e_1+e_2=2^{k-8}-1$, we have $A_{8}+A_{16}\ge 2^{k-8}-1$. However, this 
  contradicts $A_8+A_{16}\le \frac{42+\frac{2}{3}}{4096}y-8=\frac{8}{3}<3$, which was concluded above from the first four MacWilliams identities.
\end{Proof}

\begin{Lemma}
  \label{lemma_secants}
  If there is projective $2^3$-divisible code of length $n=59$, then there also exists such a code with dimension $k=10$.
\end{Lemma}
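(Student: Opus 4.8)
The plan is the standard dimension reduction by projection: I would show that whenever the dimension exceeds $10$, one can project the associated point set from a point outside it, obtaining a projective $2^3$-divisible code of the same length $59$ but with dimension decreased by one, and then iterate down to dimension $10$.

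In detail, let $\mathcal{C}$ be a projective $2^3$-divisible $[59,k]_2$ code and let $\mathcal{P}\subseteq V:=\mathbb{F}_2^k$ be the corresponding set of $59$ points, which spans $V$. We have already seen $k\ge 10$, so if $k=10$ there is nothing to prove; assume $k\ge 11$. I would choose a point $Q\in V$ with $Q\notin\mathcal{P}$ that moreover lies on no secant of $\mathcal{P}$, i.e.\ $Q\neq P_1+P_2$ for all pairs of distinct points $P_1,P_2\in\mathcal{P}$. The set of points to be avoided has size at most $\#\mathcal{P}+\binom{\#\mathcal{P}}{2}=59+1711=1770$, which is strictly less than the number $2^k-1\ge 2^{11}-1=2047$ of points of $V$; hence such a $Q$ exists.

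Now consider the projection $\pi_Q\colon V\setminus\langle Q\rangle\to V/\langle Q\rangle$. Since $Q\notin\mathcal{P}$ and $Q$ lies on no secant of $\mathcal{P}$, the map $\pi_Q$ is injective on $\mathcal{P}$, so $\mathcal{P}':=\pi_Q(\mathcal{P})$ consists of $59$ distinct points, and it spans $V/\langle Q\rangle\cong\mathbb{F}_2^{k-1}$ because $\mathcal{P}$ spans $V$. For the divisibility, every hyperplane $H'$ of $V/\langle Q\rangle$ equals $\pi_Q(H)$ for a unique hyperplane $H$ of $V$ with $Q\in H$, and since $\mathcal{P}\cap\langle Q\rangle=\emptyset$ we obtain $\#(\mathcal{P}'\cap H')=\#(\mathcal{P}\cap H)\equiv 59\pmod 8$. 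Hence $\mathcal{P}'$ is again $2^3$-divisible, so it yields a projective $2^3$-divisible $[59,k-1]_2$ code. Iterating this step — which remains available at every stage, as the avoidance count $1770$ stays below $2^m-1$ for all $m\ge 11$ — produces after $k-10$ steps the desired code of length $59$ and dimension exactly $10$.

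The argument is essentially routine; the only point that needs checking is the counting estimate $\binom{59}{2}+59<2^{11}-1$ guaranteeing a legal center of projection, and here the comfortable margin leaves no real obstacle. (One could sharpen it by noting that many of the sums $P_1+P_2$ coincide or already lie in $\mathcal{P}$, but the crude bound already suffices.)
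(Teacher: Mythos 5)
Your proposal is correct and is essentially the paper's own argument: both rest on projecting $\mathcal{P}$ from a point $Q\notin\mathcal{P}$ lying on no secant, and on the count $\binom{59}{2}=1711$ of secants versus the $2^k-60>1711$ points outside $\mathcal{P}$ when $k\ge 11$. The only cosmetic difference is that the paper phrases this as a minimality/contradiction argument while you iterate the projection directly; the content is the same.
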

\begin{Proof}
  Let $\mathcal{C}$ be a projective $2^3$-divisible code of length $n=59$ with minimum possible dimension $k$ and $\mathcal{P}$ be the corresponding 
  set of points in $\mathbb{F}_2^k$, i.e., the points spanned by the columns of an arbitrary generator matrix of $\mathcal{C}$. For each point $Q$ in 
  $\mathbb{F}_2^k$ that is not contained in $\mathcal{C}$ we consider the projection through $Q$, that is the multiset image of $\mathcal{P}$ under 
  the map $\mathbb{F}_2^k \to \mathbb{F}_2^k/Q$, $v \mapsto (v+Q) / Q$. The result is a multiset of points in $\mathbb{F}_2^{k-1}$ where all points 
  on a common line through $Q$ got identified. The corresponding linear code $\mathcal{C}'$ is a subcode of $\mathcal{C}$ and therefore a $2^3$-divisible 
  binary linear code of (effective) length $n=59$, since $Q$ is not contained in $\mathcal{P}$, and dimension $k-1$. The code $\mathcal{C}'$ is non-projective 
  iff every point $Q$ not contained in $\mathcal{P}$ lies on a secant, i.e., on a line consisting of $Q$ and two points in $\mathcal{P}$. Due to the assumed 
  minimality of $k$, any point of $\mathbb{F}_2^k\backslash\mathcal{P}$ must lie on a secant. $\mathcal{P}$ admits at most 
  $\binom{\#\mathcal{P}}{2} = {{59}\choose {2}}=1711$ secants which cover at most $1711$ different points not in $\mathcal{P}$.
  On the other hand, there are $2^k - 60$ points not contained in $\mathcal{P}$, which forces $k\le 10$. 
  As shown at the beginning of this section, we have $k\ge 10$, so that $k=10$.  
\end{Proof}

\begin{Theorem}
  \label{main_theorem}
  There is no binary projective $2^3$-divisible linear code of effective length $59$.
\end{Theorem}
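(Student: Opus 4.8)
The plan is to combine the two structural reductions already established with the linear-programming bound in dimension $10$, and then to eliminate the single surviving weight enumerator by a finer geometric analysis.

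Suppose $\mathcal{C}$ is a binary projective $2^{3}$-divisible code of effective length $59$. By Lemma~\ref{lemma_secants} we may assume $k=\dim(\mathcal{C})=10$, so $y=2^{k}=1024$. If $\mathcal{C}$ contained a codeword of weight $40$ we would contradict Lemma~\ref{lemma_exclude_19_k_10}; hence $A_{40}[\mathcal{C}]=0$. Substituting $A_{40}=0$ and $y=1024$ into the four MacWilliams expressions stated at the beginning of this section and invoking the inequalities derived there (in particular $B_{3}\le 95$, together with $A_{8}\ge 0$ and integrality), one is forced to $A_{8}=0$, $B_{3}=93$, and the unique weight enumerator
$$
  w(\mathcal{C}) = 1 + 2x^{16} + 312x^{24} + 709x^{32}.
$$

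I would then pass to the point-set picture: $\mathcal{P}$ is a set of $59$ points in $\operatorname{PG}(9,2)$ containing exactly $B_{3}=93$ lines, with exactly two hyperplanes $H_{1},H_{2}$ meeting $\mathcal{P}$ in $43$ points and every other hyperplane meeting it in $27$ or $35$ points. Inclusion--exclusion over the three hyperplanes through a codimension-$2$ subspace shows that every such subspace meets $\mathcal{P}$ in one of the sizes $11,15,19,23,27,31,35$; as only two hyperplanes are $43$-secants, the third hyperplane through $S:=H_{1}\cap H_{2}$ has at most $35$ points, forcing $\lvert S\cap\mathcal{P}\rvert\in\{27,31\}$. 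In particular no codimension-$2$ subspace contained in $H_{1}$ meets $\mathcal{P}$ in $43$ points, so $\mathcal{P}\cap H_{1}$ spans $H_{1}$ and the residual code of a weight-$16$ codeword is a projective $2^{2}$-divisible $[43,9]$-code with all weights in $\{8,12,16,20,24,28,32\}$; by Theorem~\ref{thm_characterization} it cannot itself be $2^{3}$-divisible, so it has a codeword of weight in $\{12,20,28\}$.

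The main obstacle is the final elimination of this weight enumerator: it satisfies all MacWilliams identities, all self-orthogonality inequalities $A_{i}\le B_{i}$, and all the standard averaging identities for the numbers of $i$-point hyperplanes and of codimension-$2$ subspaces, so the contradiction must come from the fine structure of $\mathcal{P}$. I would exploit the rigidity caused by the unique pair $H_{1},H_{2}$. When $\lvert S\cap\mathcal{P}\rvert=27$ one has $\mathcal{P}\subseteq H_{1}\cup H_{2}$, so the $2^{8}$ points off $H_{1}\cup H_{2}$ are all non-points; each must lie on a secant, and such a secant necessarily joins the $16$-point set $\mathcal{P}\cap(H_{1}\setminus S)$ to the $16$-point set $\mathcal{P}\cap(H_{2}\setminus S)$, which by a counting argument forces these two sets to decompose $\mathbb{F}_{2}^{8}$ as a sumset with unique representation -- a very restrictive condition that must be confronted with the $2^{2}$-divisibility of $\mathcal{P}\cap H_{1}$. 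When $\lvert S\cap\mathcal{P}\rvert=31$ there are exactly four points of $\mathcal{P}$ off $H_{1}\cup H_{2}$ and an analogous, slightly more involved count of the secants covering the remaining non-points in that region, which I would combine with the divisibility of the two $[43,9]$ residual codes attached to the weight-$16$ codewords and of the $2^{1}$-divisible code of length $\lvert S\cap\mathcal{P}\rvert$ carried by $S$. I expect that pushing these counts far enough, if necessary supported by a short exhaustive search over the now heavily constrained configurations around $H_{1}$ and $H_{2}$, produces the contradiction, and that this is the only genuinely hard step of the argument.
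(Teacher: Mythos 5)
Your reduction to the unique weight enumerator $1+2x^{16}+312x^{24}+709x^{32}$ in dimension $k=10$ is correct and coincides with the first half of the paper's proof (Lemma~\ref{lemma_secants}, Lemma~\ref{lemma_exclude_19_k_10}, then the MacWilliams equations with $A_{40}=0$, $y=1024$, forcing $A_8=0$). But the proof is not finished: the elimination of this weight enumerator is exactly the remaining content of the theorem, and what you offer for it is a speculative programme (sumset decompositions of $\mathbb{F}_2^8$, secant counts around $H_1\cup H_2$, ``if necessary supported by a short exhaustive search'') with no completed contradiction. That is a genuine gap, not a routine detail. Moreover your guiding claim that ``the contradiction must come from the fine structure of $\mathcal{P}$'' is mistaken, and the $[43,9]$ residual codes of the weight-$16$ words are a dead end on their own, since projective $2^2$-divisible codes of length $43$ exist in abundance by Theorem~\ref{thm_characterization}(ii).

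The paper finishes with a short, soft argument that you should compare with. Take one weight-$16$ codeword $c$ and let $\mathcal{C}'$ be the restriction (projection) of $\mathcal{C}$ to the $16$ coordinates of $\operatorname{supp}(c)$. For any $x\in\mathcal{C}$ one has $w(x+c)=w(x)+16-2\,w(x|_{\operatorname{supp}(c)})$, and since all three full-length weights are divisible by $8$, every weight of $\mathcal{C}'$ is divisible by $4$; a doubly-even binary code of length $16$ is self-orthogonal, hence $\dim(\mathcal{C}')\le 8$. On the other hand, any $x$ in the kernel of the restriction map has $w(x)=w(x+c)-16\le 32-16=16$, and since $A_8=0$ the kernel contains only $0$ and possibly the second weight-$16$ word, so $\dim(\mathcal{C}')\ge k-1=9$. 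This contradiction closes the proof in two lines, using only the already-derived facts $A_8=A_{40}=0$ and $\max$ weight $32$. Your case split on $\lvert H_1\cap H_2\cap\mathcal{P}\rvert\in\{27,31\}$ is correctly set up but would still need substantial (and currently absent) work to reach a contradiction by that route.
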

\begin{Proof}
  Assume that $\mathcal{C}$ is a binary projective $2^3$-divisible linear code of effective length $n=59$. Due to Lemma~\ref{lemma_secants} 
  we can assume that $\mathcal{C}$ has dimension $k=10$. As mentioned in Section~\ref{sec_preliminaries}, the weights are contained 
  in $\{8,16,24,32,40\}$. Weight $40$ is excluded in Lemma~\ref{lemma_exclude_19_k_10}. Plugging $A_{40}[\mathcal{C}]=0$ and $y=2^{10}$ 
  into the equations at the beginning of this section gives $A_{16}[\mathcal{C}]=2-3A_{8}[\mathcal{C}]$, where $A_{8}[\mathcal{C}]$ is considered
  as the free parameter in the solution of the first four MacWilliams identities. Since $A_{8}[\mathcal{C}]$ and $A_{16}[\mathcal{C}]$ are non-negative integers, we conclude that $A_{8}[\mathcal{C}]=0$ and obtain the unique solution 
  $A_{16}[\mathcal{C}]=2$, $A_{24}[\mathcal{C}]=312$, $A_{32}[\mathcal{C}]=709$.
  %% MacWilliams Transform liefert:
  %% [ <0, 1>, <3, 93>, <4, 310>, <5, 3792>, <6, 46160>, <7, 344550>, <8,
  %% 2151175>,<9, 12197952>, <10, 61409088>, <11, 273615459>, <12, 1093108172>,
  %% <13,3951670560>, <14, 12987217696>, <15, 38962103052>, <16, 107140208817>,
  %% <17,271003125312>, <18, 632348037952>, <19, 1364531278557>, <20,
  %% 2729056387226>,<21, 5068276290480>, <22, 8754294415152>, <23,
  %% 14082929921034>, <24,21124407877183>, <25, 29574289699968>, <26,
  %% 38674049813120>, <27,47268109703275>, <28, 54020713220968>, <29,
  %% 57746482863552>, <30,57746482863552>, <31, 54020713220968>, <32,
  %% 47268109703275>, <33,38674049813120>, <34, 29574289699968>, <35,
  %% 21124407877183>, <36,14082929921034>, <37, 8754294415152>, <38,
  %% 5068276290480>, <39, 2729056387226>,<40, 1364531278557>, <41,
  %% 632348037952>, <42, 271003125312>, <43, 107140208817>,<44, 38962103052>,
  %% <45, 12987217696>, <46, 3951670560>, <47, 1093108172>, <48,273615459>,
  %% <49, 61409088>, <50, 12197952>, <51, 2151175>, <52, 344550>, <53,46160>,
  %% <54, 3792>, <55, 310>, <56, 93>, <59, 1> ]

  Now let $c$ be one of the two codewords of weight $16$ and $\mathcal{C}'$ be the restriction of $\mathcal{C}$ to $\operatorname{supp}(c)$, i.e., to the $16$ non-zero positions of $c$.
  The code $\mathcal{C}'$ is a binary linear code of effective length $16$.
  By the $2^3$-divisibility of $\mathcal{C}$ and the fact that $\mathcal{C}'$ contains the all-$1$-word, we see that $\mathcal{C}'$ is $2^2$-divisible.

  Consider a codeword $x\in\mathcal{C}$ whose restriction to $\operatorname{supp}(c)$ is the zero word.
  Then $w(x + c) = w(x) + 16 \leq 32$ and therefore $w(x) \leq 16$.
  Thus, $x$ is either the zero word of $\mathcal{C}$ or, possibly, the second word of weight $16$.
  This implies $\dim(\mathcal{C}') \geq k-1 = 9$.%
  \footnote{To make this argument precise, apply the rank-nullity theorem to the surjective linear map $\varphi : \mathcal{C} \to \mathcal{C}'$ which restricts a codeword to the $16$ coordinates in $\operatorname{supp}(c)$.}
  However, the $2^2$-divisibility implies that $\mathcal{C}'$ is self-orthogonal of length $16$ and therefore of dimension at most $16/2 = 8$ -- contradiction.
\end{Proof}

%% \section{Possible lengths of binary projective $2^4$-divisible linear codes}
%% \label{sec_16_divisible}

\section{Conclusion and open problems}
\label{sec_conclusion}
By purely theoretical methods we were able to exclude the existence of a binary projective $2^3$-divisible linear code 
of effective length $n=59$. This completes the characterization of the possible lengths of $2^3$-divisible projective linear codes, which play 
some role in applications. 

We might have streamlined our theoretical reasoning. Lemma~\ref{lemma_exclude_19_conf_2} may be removed completely and the proof of 
Lemma~\ref{lemma_exclude_19_conf_3} may be restricted to the case $k=10$. However, we have included their full proofs due to the subsequent
reason. In Lemma~\ref{lemma_exclude_19_conf_2} we have used a codeword of weight $40$ to conclude the existence of a certain number of codewords 
of weight at most $16$ that contradicts the MacWilliams identities. In Lemma~\ref{lemma_exclude_19_conf_3} we have used a refined analysis starting 
from two codewords of weight $40$. In principle each codeword of weight $40$ implies the existence of some codewords of weight at most $16$. Since 
there have to be a lot of codewords of weight $40$, there should also be quite some codewords of weight at most $16$. It would be very nice to turn 
this vague idea into a rigorous theoretic argument for the exclusion of the third possible weight enumerator in Lemma~\ref{lemma_we_2_2_n_19}.  

While we prefer a theoretical argument over computer computations, we nevertheless state an alternative computational verification of our 
main result in the appendix. The computational methods might also be applicable for other coding theoretic existence questions.

Clearly, it would be desirable to see generalizations of the (completed) characterization Theorem~\ref{thm_characterization} for other parameters. To this end, we 
state the list of length of $2^4$-divisible binary projective linear codes for which the existence question is undecided, at least up to our knowledge:
$$
  \{130,163,164,165,185,215,216,232,233,244,245,246,247,274,275,277,278,306,309\}.
$$   
%% 129 exists; 131 does not exist
For $q=3$ the smallest open case is that of $3^2$-divisible ternary projective linear codes with 
$$\{70,77,99,100,101,102,113,114,115,128\}$$ 
as the set of undecided lengths.

\section*{Acknowledgments}
We would like to thank Iliya Bouyukliev for discussions on the usage of his software \texttt{Q-Extension} and a personalized version that is capable to deal 
with larger dimensions, see the appendix. 

% \bibliography{no_projective_59}
% \bibdata{no_projective_59}
% %%\bibliographystyle{amsplain}
% \bibliographystyle{abbrv}

\appendix
\section{An alternative computational approach}
Instead of the theoretical reasoning presented in Section~\ref{sec_no_59}, Theorem~\ref{main_theorem} can also be 
obtained by computer calculations. Our starting point is a computational version of Lemma~\ref{lemma_we_2_2_n_19}. In \cite{doran2011codes} 
binary projective $2^2$-divisible linear codes with small parameters were classified using extensive computer enumerations. For length $19$ 
there are exactly three non-isomorphic examples, see \url{https://rlmill.github.io/de_codes}, which of course match our three geometric 
constructions given in Section~\ref{sec_preliminaries}. We have verified the specific case $n=19$ using \texttt{Q-Extension} \cite{bouyukliev2007q}.

%%%%%%%%%%%%%%%%%%%%%%%%%%%%%%%%%%
%%       data from 59_40/       %%
%%%%%%%%%%%%%%%%%%%%%%%%%%%%%%%%%%

\begin{table}[htp]
  \begin{center}
    \begin{tabular}{rrrrrrrrrrrrrrrrrrrr}
      \hline
      k/n & 8 & 12 & 14 & 15 & 16 & 20 & 22 & 23 & 24 & 26 & 27 & 28 & 29 & 30 & 31 & 32 & 34 & 35 & 36 \\ 
      \hline
      1   & 1 &  0 &  0 &  0 &  1 &  0 &  0 &  0 &  1 &  0 &  0 &  0 &  0 &  0 &  0 &  1 &  0 &  0 &  0 \\ 
      2   &   &  1 &  0 &  0 &  1 &  1 &  0 &  0 &  2 &  0 &  0 &  2 &  0 &  0 &  0 &  3 &  0 &  0 &  3 \\
      3   &   &    &  1 &  0 &  1 &  1 &  1 &  0 &  3 &  1 &  0 &  4 &  0 &  3 &  0 &  8 &  2 &  0 &  9 \\ 
      4   &   &    &    &  1 &  1 &  1 &  1 &  1 &  4 &  1 &  1 &  6 &  1 &  6 &  4 & 18 &  7 &  3 & 27 \\
      5   &   &    &    &    &  1 &  0 &  0 &  1 &  4 &  2 &  1 &  7 &  1 &  8 &  8 & 32 & 14 &  7 & 54 \\
      6   &   &    &    &    &    &    &    &    &  1 &  0 &  1 &  6 &  2 &  7 &  8 & 34 & 11 &  7 & 65 \\
      7   &   &    &    &    &    &    &    &    &    &    &    &  1 &  1 &  6 &  6 & 24 &  5 &  3 & 36 \\
      8   &   &    &    &    &    &    &    &    &    &    &    &    &    &  2 &  4 & 13 &  1 &  1 & 11 \\
      9   &   &    &    &    &    &    &    &    &    &    &    &    &    &    &  1 &  5 &  0 &  0 &  1 \\
     10   &   &    &    &    &    &    &    &    &    &    &    &    &    &    &    &  1 &  0 &  0 &  0 \\
      \hline 
    \end{tabular}
    \caption{Number of $[n,k]_2$ codes with weights in $\{8,16,24,32,40\}$ -- part 1.}
    \label{tab_8_16_24_32_40_p1}
  \end{center}
\end{table}

The next step is to show that none of the three mentioned codes above can be a residual code of a binary projective 
$2^3$-divisible code of length $n=59$. Next we provide all computational results for the case of the weight enumerator 
$w(\mathcal{C})=1+1\cdot x^4+75\cdot x^8+51\cdot x^{12}$. First note that the unique isomorphism type 
can be represented by the following generator matrix:
$$
  M_{19}=
  \begin{pmatrix}
    1 0 0 0 0 0 1 1 1 1 0 0 0 1 1 0 1 0 0 \\
    0 1 0 0 0 0 1 1 1 0 0 1 0 1 0 1 0 1 0 \\
    0 0 1 0 0 0 1 1 0 0 1 1 0 0 1 1 1 0 0 \\
    0 0 0 1 0 0 0 1 0 1 1 0 1 0 1 0 1 1 0 \\
    0 0 0 0 1 0 1 0 0 1 1 0 0 1 1 1 0 1 0 \\
    0 0 0 0 0 1 1 0 0 1 1 0 0 1 0 1 1 1 0 \\
    0 0 0 0 0 0 1 0 1 0 0 0 1 0 1 1 1 1 1 \\
  \end{pmatrix}
$$ 

With this, the generator matrix of a $2^3$-divisible projective linear code $C_{59}$ of length $n=59$, that contains a codeword of weight $40$, can be written as
$$
\begin{pmatrix}
  M_{19} & M_1\\ 
       0 & M_2
\end{pmatrix},
$$
where $M_2$ is the generator matrix of a $2^3$-divisible linear code $C_{40}$ of length $n=40$ that is not necessarily projective. Using the 
software package \texttt{Q-Extension} \cite{bouyukliev2007q} we have enumerated all binary linear codes with effective length $n\le 40$ and 
weights in $\{8,16,24,32,40\}$, see Table~\ref{tab_8_16_24_32_40_p1} and Table~\ref{tab_8_16_24_32_40_p2}. Here blank entries on the left hand side 
of each row as well as missing columns correspond to effective lengths where no such code exists. We remark that the $2^3$-divisible linear codes 
up to length $n=48$ have been determined, in principle, in \cite{betsumiya2012triply}. We have validated our results with the corresponding list of 
codes at http://www.st.hirosaki-u.ac.jp/$\sim$betsumi/triply-even/.  

\begin{table}[htp]
  \begin{center}
    \begin{tabular}{rrrrrrrrrrrrrrrrrrrr}
      \hline
      k/n &  37 & 38 & 39 &  40 \\ 
      \hline
      1   &   0 &  0 &  0 &   1 \\ 
      2   &   0 &  0 &  0 &   4 \\
      3   &   0 &  6 &  0 &  17 \\ 
      4   &   2 & 22 & 10 &  64 \\
      5   &   5 & 59 & 36 & 194 \\
      6   &   8 & 79 & 57 & 347 \\
      7   &   5 & 61 & 49 & 323 \\
      8   &   1 & 21 & 30 & 177 \\
      9   &   0 &  2 & 10 &  59 \\
     10   &   0 &  0 &  1 &  11 \\ 
     11   &   0 &  0 &  0 &   1 \\
      \hline 
    \end{tabular}
    \caption{Number of $[n,k]_2$ codes with weights in $\{8,16,24,32,40\}$ -- part 2.}
    \label{tab_8_16_24_32_40_p2}
  \end{center}
\end{table}

Thus, the dimension of $C_{40}$ is at most $11$, so that the dimension of $C_{59}$ is at most $18$. We remark that the \textit{divisible code bound} 
from \cite{ward1992bound} gives an upper bound of $21$ for the dimension. In order to enumerate the possibilities for $C_{59}$ we can start 
from one of the codes $C_{40}$ and iteratively add one row to the generator matrix maintaining the property that the code is $2^3$-divisible. We start by formulating 
this general approach as an enumeration problem of integral points in a polyhedron:
\begin{Lemma}
  \label{lemma_ILP} %%(\cite[Lemma 1]{classification_8_div_min_dist_24})
  Let $G$ be a systematic generator matrix of an $[n,k]_2$ code whose weights are $\Delta$-divisible and are contained in $\left[a\cdot \Delta,b\cdot \Delta\right]$. 
  By $c(u)$ we denote the number of columns of $G$ that equal $u$ for all $u$ in $\mathbb{F}_2^k\backslash\mathbf{0}$, $c(\mathbf{0})=n'-n$,  and let $\mathcal{S}(G)$ be 
  the set of feasible solutions of 
  \begin{eqnarray}
    \Delta y_h+\sum_{v\in\mathbb{F}_2^{k+1}\,:\,v^\top h=0} x_v =n-a\Delta&&\forall h\in\mathbb{F}_2^{k+1}\backslash\mathbf{0}\label{eq_hyperplane}\\
    x_{(u,0)}+x_{(u,1)} =c(u) && \forall u\in\mathbb{F}_2^{k} \label{eq_c_sum}\\
    x_{e_i}\ge 1&&\forall 1\le i\le k+1\label{eq_systematic}\\
    x_v\in \mathbb N &&\forall v\in\mathbb{F}_2^{k+1}\\ 
    y_h\in\{0,...,b-a\} && \forall h\in\mathbb{F}_2^{k+1}\backslash\mathbf{0}\label{hyperplane_var},
  \end{eqnarray}
  where $e_i$ denotes the $i$th unit vector in $\mathbb{F}_2^{k+1}$ and $n'\ge n+1$. Then, for every systematic generator matrix $G'$ of an $[n',k+1]_2$ code $C'$ 
  whose first $k$ rows coincide with $G$ we have a solution $(x,y)\in\mathcal{S}(G)$ such that $G'$ has exactly $x_v$ columns equal to $v$ for each $v\in\mathbb{F}_2^{k+1}$.
\end{Lemma}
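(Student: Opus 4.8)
The plan is to exhibit the feasible point of (\ref{eq_hyperplane})--(\ref{hyperplane_var}) directly from $G'$ and then verify the constraints family by family; feasibility is all that is required, since by construction the point I produce has exactly $x_v$ columns of $G'$ equal to $v$. Given an admissible $G'$, I set $x_v$ to be the number of columns of $G'$ equal to $v\in\mathbb{F}_2^{k+1}$, and I write $N:=\sum_{v} x_v$ for the number of columns of $G'$. The one fact doing all the work is the usual correspondence between a generator matrix and the hyperplanes of its column space: the coordinate of the codeword $h^\top G'\in C'$ attached to a column $v$ is $v^\top h$, so it vanishes precisely for the columns lying in the hyperplane $\{v:v^\top h=0\}$. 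Writing $w_h$ for the weight of $h^\top G'$, this yields
\[
        \sum_{v\in\mathbb{F}_2^{k+1}\,:\,v^\top h=0} x_v \;=\; N-w_h \qquad\text{for all }h\neq\mathbf{0},
\]
and I would stress that this holds with $N$ the \emph{full} column count, zero columns included, because every zero column satisfies $v^\top h=0$; thus the identity is insensitive to how many of the $n'-n$ ``padding'' columns of $G'$ have been made nonzero by the new row.

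Granting this identity, (\ref{eq_hyperplane}) amounts to $\Delta y_h=w_h-a\Delta$, so I would set $y_h:=(w_h-a\Delta)/\Delta$. Since $C'$ is $\Delta$-divisible and $a\Delta\le w_h\le b\Delta$ --- the running hypothesis being that all nonzero weights of $C'$ lie in $[a\Delta,b\Delta]$ --- the quantity $y_h$ is an integer with $0\le y_h\le b-a$, which is exactly (\ref{hyperplane_var}). For the column constraints: the first $k$ rows of $G'$ are $G$ with $n'-n$ all-zero columns adjoined, so the projection $\mathbb{F}_2^{k+1}\to\mathbb{F}_2^{k}$ deleting the last coordinate carries the column multiset of $G'$ onto that of $G$ enlarged by $c(\mathbf{0})=n'-n$ copies of $\mathbf{0}$; reading off the multiplicity above a fixed $u$ gives $x_{(u,0)}+x_{(u,1)}=c(u)$, i.e.\ (\ref{eq_c_sum}). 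As $G'$ is systematic, each unit vector $e_i$ of $\mathbb{F}_2^{k+1}$ occurs among its columns, so $x_{e_i}\ge 1$, i.e.\ (\ref{eq_systematic}); and the $x_v$ are non-negative integers by definition. Hence $(x,y)\in\mathcal{S}(G)$, as claimed.

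I do not expect a genuine obstacle here: the lemma is a reformulation of the condition ``$G'$ is a $\Delta$-divisible extension of $G$ with weights in $[a\Delta,b\Delta]$'' as a system of linear (in)equalities, and the proof is a verification. The two places that call for a little care in writing it out are (i) the zero-column bookkeeping in the displayed identity --- one must check that the constant on the right of (\ref{eq_hyperplane}) really is the total column length of $G'$ minus $a\Delta$, and not the effective length of $G$ or of $C'$; and (ii) fixing the precise meaning of ``systematic'' and of ``the first $k$ rows of $G'$ coincide with $G$'', most cleanly that the two column multisets agree and that $I_{k+1}$ appears among the columns of $G'$ --- which is in any case the normal form in which the $G'$ are enumerated. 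Everything else is the elementary hyperplane/codeword correspondence used throughout the paper.
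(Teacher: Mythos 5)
Your proof takes the same route as the paper's (the paper's own proof is a four-line version of exactly this verification: take $x_v$ to be the column counts of $G'$, observe that (\ref{eq_systematic}) follows from systematicity, (\ref{eq_c_sum}) from the first $k$ rows coinciding with $G$, and that (\ref{eq_hyperplane}) and (\ref{hyperplane_var}) encode the weight restriction), and your two caveats are both on target rather than pedantic. First, the lemma as printed only hypothesizes the divisibility and weight range for the code generated by $G$, whereas the conclusion needs it for $C'$; you correctly make this the running hypothesis, which is what the paper intends (the ILP is used precisely to enumerate $\Delta$-divisible extensions). Second, your bookkeeping identity $\sum_{v\,:\,v^\top h=0}x_v=N-w_h$ with $N=\sum_v x_v=n'$ shows that your witness $y_h=(w_h-a\Delta)/\Delta$ satisfies (\ref{eq_hyperplane}) only if its right-hand side is $n'-a\Delta$, not $n-a\Delta$ as written; since the discrepancy $n'-n$ need not be a multiple of $\Delta$, no choice of $y_h$ repairs the literal constraint when $n'>n$. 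So what you have flagged as "a place that calls for care" is in fact a typo in the statement ($n$ should be $n'$, equivalently $n+c(\mathbf{0})$), which the paper's own proof passes over silently; your argument proves the corrected statement, which is the one actually used.
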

\begin{proof}
  Let such a systematic generator matrix $G'$ be given and $x_v$ denote the number of columns of $G'$ that equal $v$ for all $v\in\mathbb{F}_2^{k+1}$. Since $G'$ is systematic, 
  Equation~(\ref{eq_systematic}) is satisfied. As $G'$ arises by appending a row to $G$, also Equation~(\ref{eq_c_sum}) is satisfied. Obviously, the $x_v$ are non-negative 
  integers. The conditions (\ref{eq_hyperplane}) and (\ref{hyperplane_var}) correspond to the restriction that the weights are $\Delta$-divisible and contained in 
  $\{a\Delta,\dots,b\Delta\}$.   
\end{proof}
We remark that also every solution in $\mathcal{S}(G)$ corresponds to an $[n',k+1]_2$ code $C'$ with generator matrix $G'$ containing $C$ as a subcode. Different generator 
matrices of corresponding isomorphic codes may be reduced to just one representing generator matrix using the tools from \texttt{Q-Extension} \cite{bouyukliev2007q}. In order 
to implement our extra knowledge of the generator matrix $M_{19}$ of the residual code, we ensure that the extended generator matrices coincide outside of the $40$ columns of 
$C_{40}$ with the last $r$ rows of $M_{19}$, where $r$ increases from $1$ to $7$. In terms of the integer linear program from Lemma~\ref{lemma_ILP} we just 
fix the counts $x_v$ of the corresponding columns $v$. At $r=7$ we can check if the resulting codes %%have effective length $n=59$ and  
are projective. It turns out that this never happens, so that:
%% (partially) without using the software code for residual codes
%% [40,3]^17->[48,4]^120->[52,5]^777->[54,6]^3652->[56,7]^6158->[57,8]^2846->[58,9]^?->[59,10]^? <- no one projective
%% [40,4]^64->[48,5]^652->[52,6]^4475->[54,7]^14951->[56,8]^7211->[57,9]^951->[58,10]^0 
%% [40,5]^194->[48,6]^2007->[52,7]^9510->[54,8]^18992->[56,9]^2856->[57,10]^153->[58,11]^0
%% [40,6]^347->[48,7]^2538->[52,8]^26879->[54,9]^?->[56,10]^?->[57,11]^?->[58,12]^?->[59,13]^? <- no one projective
%% [40,7]^323->[48,8]^1426->[52,9]^7085->[54,10]^19647->[56,11]^19869->[57,12]^1416->[58,13]^321->[59,14]^61 <- no one projective 
%% [40,8]^177->[48,9]^420->[52,10]^1020->[54,11]^1256->[56,12]^435->[57,13]^93->[58,14]^21->[59,15]^3 <- no one projective
%% [40,9]^59->[48,10]^74->[52,11]^95->[54,12]^84->[56,13]^5->[57,14]^3->[58,15]^1->[59,16]^0
%% [40,10]^11->[48,11]^6->[52,12]^4->[54,13]^4->[56,14]^0
%% [40,11]^1->[48,12]^0

\begin{Lemma}
  \label{lemma_exclude_19_conf_1}
  Let $\mathcal{C}$ be a projective $2^3$-divisible code of length $n=59$, $k=\dim(\mathcal{C})$ and $d$ be a codeword of weight $40$. 
  For the corresponding residual code $\mathcal{D}$ we have $w(\mathcal{D})\neq 1+1\cdot x^4+75\cdot x^8+51\cdot x^{12}$. 
\end{Lemma}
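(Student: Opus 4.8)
A purely theoretical exclusion in the spirit of Lemmas~\ref{lemma_exclude_19_conf_2} and~\ref{lemma_exclude_19_conf_3} does not seem to be available for this residual type, so the plan is to argue by an exhaustive extension search. First, suppose for contradiction that $\mathcal{C}$ is a projective $2^3$-divisible code of length $59$ with a codeword $d$ of weight $40$ whose residual code $\mathcal{D}$ has weight enumerator $1+x^4+75x^8+51x^{12}$. Since $\mathcal{D}$ is a projective $2^2$-divisible code of length $19$ and there are, up to isomorphism, exactly three such codes --- one for each weight enumerator of Lemma~\ref{lemma_we_2_2_n_19} --- we may assume that $\mathcal{D}$ has generator matrix $M_{19}$. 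As explained above, $\mathcal{C}$ then admits a generator matrix of the block form $\left(\begin{smallmatrix} M_{19} & M_1\\ 0 & M_2\end{smallmatrix}\right)$, in which $C_{40}:=\langle M_2\rangle$ is a (possibly non-projective) $2^3$-divisible code of length $40$ containing the all-ones word. By Tables~\ref{tab_8_16_24_32_40_p1} and~\ref{tab_8_16_24_32_40_p2}, every such $C_{40}$ has dimension at most $11$, so $\dim(\mathcal{C})\le 18$ and only finitely many isomorphism types of $C_{40}$ need to be considered.

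Starting from each such $C_{40}$, I would reconstruct $\mathcal{C}$ by appending the seven rows of $(M_{19}\mid M_1)$ one at a time. Every code produced along the way is a subcode of $\mathcal{C}$, hence still $2^3$-divisible, so each single-row extension is governed by the integer linear program $\mathcal{S}(G)$ of Lemma~\ref{lemma_ILP} with $\Delta=8$ and weights in $[8,40]$, i.e.\ $a=1$ and $b=5$: enumerating its integral solutions yields all admissible column-multiplicity profiles of the enlarged generator matrix, and the isomorphism routines of \texttt{Q-Extension} retain one representative per type. The key to feasibility is the extra knowledge of $M_{19}$: once $r$ of the seven rows have been appended, their restriction to the $19$ coordinates complementary to $\operatorname{supp}(d)$ coincides with $r$ of the rows of $M_{19}$ --- only the entries on $\operatorname{supp}(d)$, i.e.\ the corresponding rows of $M_1$, remain free --- and this is imposed in the program simply by fixing the counts $x_v$ of those columns.

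Once all seven rows of $M_{19}$ have been incorporated, each surviving code is a $2^3$-divisible binary code of length $59$ with a generator matrix of the required block form, and one tests each of them for repeated columns; it turns out that none is projective, which contradicts the assumption and establishes the lemma. The step I expect to be the main obstacle is keeping the enumeration within computational reach, since a priori the number of $2^3$-divisible codes arising at the intermediate lengths and dimensions could be very large; the bound $\dim(\mathcal{C})\le 18$, the narrow weight window encoded in the integer program, the large block of columns already pinned down by $M_{19}$, and isomorph rejection after every step are what should keep the search feasible in practice.
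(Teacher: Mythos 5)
Your proposal reproduces, essentially verbatim, the paper's own (computational) justification of this lemma from the appendix: the block decomposition $\left(\begin{smallmatrix} M_{19} & M_1\\ 0 & M_2\end{smallmatrix}\right)$, the enumeration of the candidate codes $C_{40}$ with the dimension bound $\dim(\mathcal{C})\le 18$ from Tables~\ref{tab_8_16_24_32_40_p1} and~\ref{tab_8_16_24_32_40_p2}, the row-by-row extension via the integer program of Lemma~\ref{lemma_ILP} with the $M_{19}$-columns fixed, isomorph rejection with \texttt{Q-Extension}, and the final projectivity check. The approach and all parameter choices match; like the paper, the argument ultimately rests on the reported outcome of the exhaustive search rather than a purely theoretical exclusion.
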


Without giving the computational details, we remark that we have performed the described computational approach also for the two other residual codes, 
so that we obtain:
\begin{Corollary}
  \label{cor_no_weight_40}
  Let $\mathcal{C}$ be a projective $2^3$-divisible code of length $n=59$. Then all non-zero weights of $\mathcal{C}$ are contained 
  in $\{8,16,24,32\}$.
\end{Corollary}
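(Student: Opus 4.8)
The plan is to reduce the statement to a single remaining case and then quote the exclusion lemmas already at hand. Recall that in Section~\ref{sec_preliminaries} it was established, by applying Theorem~\ref{thm_characterization}.(ii) to residual codes, that every nonzero weight of a binary projective $2^3$-divisible code of length $59$ lies in $\{8,16,24,32,40\}$: a codeword of weight $w$ has a residual code which is projective and $2^2$-divisible of length $59-w$, and the lengths $59-48=11$ and $59-56=3$ are forbidden by Theorem~\ref{thm_characterization}.(ii). So all that is left is to rule out a codeword of weight $40$.

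Towards a contradiction, suppose $\mathcal{C}$ has a codeword $d$ of weight $40$, and let $\mathcal{D}$ be its residual code, a projective $2^2$-divisible code of length $59-40=19$. By Lemma~\ref{lemma_we_2_2_n_19} the weight enumerator of $\mathcal{D}$ is one of exactly three, and each is impossible: $1+x^4+75x^8+51x^{12}$ by Lemma~\ref{lemma_exclude_19_conf_1}, $1+78x^8+48x^{12}+x^{16}$ by Lemma~\ref{lemma_exclude_19_conf_2}, and $1+4x^4+150x^8+100x^{12}+x^{16}$ by Lemma~\ref{lemma_exclude_19_conf_3}. This contradiction proves the corollary, since the three enumerators are exhaustive.

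Thus the real work is entirely hidden in the three exclusion lemmas, and I expect the main obstacle to be Lemma~\ref{lemma_exclude_19_conf_1}. The two enumerators for which $\mathcal{D}$ has a codeword of weight $16$ are relatively soft: such a word corresponds to a codimension-two subspace meeting the point set $\overline{\mathcal{C}}$ in only $3$ points, which forces many codewords of weight $40$ and, through the relation $z-1=e_1+e_2$ of Lemma~\ref{eq_e_19}, many codewords of weight at most $16$, overloading the MacWilliams identities. The first enumerator has no such small-intersection subspace. For it I would either run an exhaustive search --- fix the generator matrix $M_{19}$ of the unique code realizing that enumerator, represent a generator matrix of $\mathcal{C}$ in block-triangular form with $M_{19}$ in the upper left and the generator matrix of a possibly non-projective $2^3$-divisible $[40,\le 11]_2$ code in the lower right, enumerate the latter, extend row by row via the integer linear program of Lemma~\ref{lemma_ILP}, and verify that projectivity never occurs --- or, more cleanly, avoid Lemma~\ref{lemma_exclude_19_conf_1} altogether by first reducing to dimension $k=10$ (Lemma~\ref{lemma_secants}) and then using that $e_1+e_2=2^{k-8}-1=3$ already forces $A_8+A_{16}\ge 3$, contradicting the bound $A_8+A_{16}\le 8/3$ from the first four MacWilliams identities (the argument of Lemma~\ref{lemma_exclude_19_k_10}). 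The delicate point in the computational route is certifying exhaustiveness of the search; the theoretical route sidesteps this but is less self-contained.
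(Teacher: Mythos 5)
Your main argument is correct and follows essentially the paper's decomposition: restrict the nonzero weights to $\{8,16,24,32,40\}$ via residual codes and Theorem~\ref{thm_characterization}.(ii), then exclude weight $40$ by ruling out each of the three residual weight enumerators from Lemma~\ref{lemma_we_2_2_n_19}. The only divergence is in how the three exclusions are certified: the paper's appendix derives the corollary by running the computational extension procedure (Lemma~\ref{lemma_ILP} together with \texttt{Q-Extension}) for all three residual codes, whereas you reuse the theoretical Lemmas~\ref{lemma_exclude_19_conf_2} and~\ref{lemma_exclude_19_conf_3} for the two enumerators containing a weight-$16$ word and fall back on computation only for $1+x^4+75x^8+51x^{12}$; this mix is legitimate, and your diagnosis that Lemma~\ref{lemma_exclude_19_conf_1} is the genuinely hard case matches the authors' own remarks in the conclusion. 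One caveat on your proposed ``cleaner'' alternative: Lemma~\ref{lemma_secants} only reduces the \emph{existence} of a length-$59$ code to the existence of one of dimension $10$ (the projection is an index-two subcode and need not retain a given weight-$40$ codeword), so the route via $e_1+e_2=2^{k-8}-1=3$ versus $A_8+A_{16}\le 8/3$ establishes the corollary only for $k=10$, or vacuously once full non-existence is known. Since the corollary as stated quantifies over all dimensions, and the appendix's subsequent enumeration of codes with weights in $\{8,16,24,32\}$ depends on that full strength, this alternative does not actually let you dispense with Lemma~\ref{lemma_exclude_19_conf_1}; note also that it still requires Lemma~\ref{lemma_exclude_19_conf_3} to force $\dim(\mathcal{D})=7$ before the count $e_1+e_2=3$ applies.
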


We remark that we might have used inequalities coming from the MacWilliams identities, like $A_8+A_{16}\le \frac{42+\frac{2}{3}}{4096}y-8$, to 
cut the generation tree of linear codes. I.e.\ at every point where we have constructed a new subcode, we may count the number of codewords $A_8$ and $A_{16}$ 
of weight $8$ and $16$, respectively. The number of codewords of weight at most $16$ of the final code of length $n=59$ cannot decrease. The starting code 
$C_{40}$ also fixes the dimension of the final code to $k=\dim(C_{40})+7$, so that we may check the violation of the above inequality using $y=2^k$. However, 
the computer enumerations were fast enough so that it was not necessary to implement these checks.

%%%%%%%%%%%%%%%%%%%%%%%%%%%%%%%%%%
%%     data from 59_special/    %%
%%%%%%%%%%%%%%%%%%%%%%%%%%%%%%%%%%

\begin{table}[htp]
  \begin{center}
    \begin{tabular}{rrrrrrrrrrrrrrrrrrrr}
      \hline
      k/n &  40 & 41 &  42 &  43 &   44 &  45 &   46 &   47 &   48 &   49 &   50 \\ 
      \hline 
      2   &   2 &  0 &   0 &   0 &    1 &   0 &    0 &    0 &    1 &    0 &    0 \\
      3   &  11 &  0 &   6 &   0 &    9 &   0 &    5 &    0 &    6 &    0 &    3 \\ 
      4   &  49 &  2 &  36 &  12 &   62 &   9 &   50 &   15 &   59 &    9 &   42 \\
      5   & 154 & 12 & 158 &  67 &  316 &  62 &  362 &  146 &  503 &  135 &  495 \\
      6   & 277 & 29 & 335 & 180 &  903 & 273 & 1394 &  704 & 2533 &  877 & 3245 \\
      7   & 241 & 26 & 356 & 215 & 1194 & 528 & 2663 & 1829 & 6038 & 2176 & 8341 \\
      8   & 119 & 12 & 176 & 147 &  738 & 513 & 2285 & 2534 & 6753 & 2443 & 6643 \\
      9   &  29 &  3 &  49 &  57 &  244 & 257 & 1000 & 1610 & 3535 & 1416 & 1876 \\
     10   &   4 &  0 &   3 &  12 &   41 &  54 &  229 &  499 &  144 &  480 &   50 \\
     11   &     &    &     &   1 &    3 &  14 &   51 &   90 &   98 &   69 &   36 \\ 
     12   &     &    &     &     &      &   2 &    8 &   14 &    7 &   15 &    4 \\  
     13   &     &    &     &     &      &     &    1 &    1 &    0 &    1 &    0 \\ 
      \hline 
    \end{tabular}
    \caption{Number of $[n,k]_2$ codes with weights in $\{8,16,24,32\}$ -- part 1.}
    \label{tab_8_16_24_32_p1}
  \end{center}
\end{table}

At this point we know if $\mathcal{C}_{59}$ is a projective $2^3$-divisible code of length $n=59$, then its weights are upper bounded by $32$. So we enumerate those 
linear codes, that are possibly non-projective, using \texttt{Q-Extension} \cite{bouyukliev2007q}. Of course, for $n<40$ the counts coincide with those
from Table~\ref{tab_8_16_24_32_40_p1} and Table~\ref{tab_8_16_24_32_40_p2}. For length $n\ge 40$ we state the resulting counts in Table~\ref{tab_8_16_24_32_p1} 
and Table~\ref{tab_8_16_24_32_p2}.

\begin{table}[htp]
  \begin{center}
    \begin{tabular}{rrrrrrrrrrrrrrrrrrrr}
      \hline
      k/n &   51 &    52 &   53 &   54 &    55 &    56 &    57 &    58 &   59 \\ 
      \hline 
      2   &    0 &     0 &    0 &    0 &     0 &     0 &     0 &     0 &    0 \\
      3   &    0 &     2 &    0 &    1 &     0 &     1 &     0 &     0 &    0 \\ 
      4   &   16 &    30 &   10 &   15 &     7 &     7 &     4 &     2 &    1 \\
      5   &  210 &   444 &  206 &  291 &   157 &   144 &    74 &    40 &   17 \\
      6   & 1713 &  3695 & 2252 & 3160 &  2523 &  2295 &  1585 &   886 &  334 \\
      7   & 4585 & 10523 & 7026 & 9634 & 10043 & 11322 & 10393 &  8195 & 3695 \\
      8   & 3877 &  7309 & 5860 & 6852 &  9477 & 12719 & 14811 & 15536 & 9237 \\
      9   & 1238 &  1643 & 1975 & 2132 &  3655 &  6134 &  7659 &  8871 & 6965 \\
     10   &  198 &   171 &  305 &  224 &   768 &   547 &   558 &   911 & 2905 \\
     11   &   10 &    16 &   48 &   65 &    69 &   259 &   204 &   295 &  675 \\ 
     12   &    1 &     1 &    6 &   10 &     7 &    31 &    48 &    56 &  174 \\  
     13   &    0 &     0 &    1 &    1 &     0 &     3 &     7 &     8 &   34 \\
     14   &      &       &      &      &       &       &     1 &     1 &    6 \\
     15   &      &       &      &      &       &       &       &       &    1 \\ 
      \hline 
    \end{tabular}
    \caption{Number of $[n,k]_2$ codes with weights in $\{8,16,24,32\}$ -- part 2.}
    \label{tab_8_16_24_32_p2}
  \end{center}
\end{table}

Having checked that all resulting codes of length $n=59$ are non-projective we conclude our main result Theorem~\ref{main_theorem}.

We remark that the \textit{divisible code bound} from \cite{ward1992bound} gives an upper bound of $17$ for binary linear codes 
with weights in $\{8,16,24,32\}$.

\end{document}